\newtheorem{thm}{Theorem}
\newtheorem{proposition}{Proposition}
\newtheorem{lemma}{Lemma}
\DeclareMathOperator{\1}{\mathbbm{1}}
\newcommand{\mmp}{\mathbb{P}}
\newcommand{\mn}{\mathbb{N}}
\newcommand{\me}{\mathbb{E}}
\newcommand{\eee}{{\rm e}}
\newcommand{\mr}{\mathbb{R}}
\begin{document}

\begin{frontmatter}

\pretitle{Research Article}

\title{Laws of the iterated logarithm for iterated perturbed random walks}

\author[]{\inits{O.}\fnms{Oksana}~\snm{Braganets}\thanksref{}\ead[label=]{oksanabraganets@knu.ua}\orcid{0009-0004-8348-545X}}
\address[]{\institution{Faculty of Computer Science and Cybernetics, Taras Shevchenko National University of Kyiv}, \cny{Ukraine}}


\begin{abstract}
\noindent
Let $(\xi_k, \eta_k)_{k\geq 1}$ be independent identically distributed random vectors with arbitrarily
dependent positive components and $T_k:=\xi_1+\ldots+\xi_{k-1}+\eta_k$ for $k\in\mn$. We call the random sequence $(T_k)_{k\geq 1}$ a (globally) perturbed random walk.
Consider a general branching process generated by $(T_k)_{k\geq 1}$ and let $Y_j(t)$ denote the number of the $j$th generation individuals with birth times $\leq t$. Assuming that ${\rm Var}\,\xi_1\in (0,\infty)$ and allowing the distribution of $\eta_1$ to be arbitrary, we prove a law of the iterated logarithm (LIL) for $Y_j(t)$. In particular, a LIL for the counting process of $(T_k)_{k\geq 1}$ is obtained. The latter result was previously established in the article Iksanov, Jedidi and Bouzeffour (2017) under the additional assumption that $\me \eta_1^a<\infty$ for some $a>0$. In this paper, we show that the aforementioned additional assumption is not needed.
\end{abstract}

\begin{keywords}
\kwd{general branching process}
\kwd{iterated perturbed random walk}
\kwd{law of the iterated logarithm}
\end{keywords}

\begin{keywords}[MSC2010]
\kwd{60F15}
\kwd{60G50}
\kwd{60J80}
\end{keywords}

\end{frontmatter}

\section{Introduction and main results}

Let $(\xi_k, \eta_k)_{k\geq 1}$ be independent copies of a random vector $(\xi, \eta)$ with positive arbitrarily
dependent components. Put $$S_0:=0,\quad S_k:=\xi_1+\ldots+\xi_k,\quad k\in\mn:=\{1,2,\ldots\}$$ and then
$$T_k:=S_{k-1}+\eta_k,\quad k\in\mn.$$ The random sequences $S:=(S_k)_{k\geq 0}$ and $T:= (T_k)_{k\geq 1}$ are known in the literature as the {\it standard random walk} and a {\it (globally) perturbed
random walk}. A survey of various
results for the so defined perturbed random walks can be found in the book \cite{Iksanov:2016}.

Put
\begin{equation*}
Y(t)~:=~\sum_{k\geq 0}\1_{\{T_k\leq t\}}, \quad t\geq 0.
\end{equation*}
A law of the iterated logarithm (LIL) for $Y(t)$, properly normalized and centered, as $t\to\infty$ {\it along integers} was proved in Proposition 2.3 of \cite{Iksanov+Jedidi+Bouzzefour:2017} under the assumption that $\me \eta^a<\infty$ for some $a>0$. We start by showing that the assumption can be dispensed with and also that the LIL holds as $t\to\infty$ {\it along reals}, thereby obtaining an ultimate version of the LIL for $Y(t)$.
For a family $(x_t)$ of real numbers denote by $C((x_t))$ the set of its limit points.
\begin{thm}\label{imp2}
Assume that $\sigma^2:={\rm Var}\,\xi\in (0,\infty)$. Then $$C\bigg(\bigg(\frac{Y(t)-\mu^{-1}\int_0^t \mmp\{\eta\leq y\}{\rm d}y}{(2\sigma^2\mu^{-3}t\log\log t)^{1/2}}\,:\,t > \eee \bigg)\bigg)=[-1,1]\quad\text{{\rm
a.s.}}, $$ where $\mu:=\me \xi<\infty$.
\end{thm}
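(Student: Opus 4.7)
The plan is to reduce the desired LIL for $Y(t)$ to the classical LIL for the counting process $\nu(t) := \sum_{k\geq 0}\1_{\{S_k \leq t\}}$ of the standard random walk, which under $\sigma^2 \in (0,\infty)$ asserts
\[
C\Bigl(\bigl(\nu(t) - \mu^{-1}t\bigr)\big/\bigl(2\sigma^2\mu^{-3}t\log\log t\bigr)^{1/2}\,:\,t > \eee\Bigr) = [-1,1]\quad\text{a.s.}
\]
This is a well-known consequence of the Hartman--Wintner LIL for $S_n$ and requires no assumption on $\eta$. The task then reduces to showing that $Y(t) - \mu^{-1}\int_0^t F(y)\,\mathrm{d}y$ differs from $\nu(t) - \mu^{-1}t$ by $o\bigl((t\log\log t)^{1/2}\bigr)$ almost surely, where $F(y) := \mmp\{\eta\leq y\}$ and $\bar F := 1-F$.

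The structural step is a martingale decomposition. Setting $\mathcal{F}_k := \sigma((\xi_j,\eta_j)\,:\,j \leq k)$, the variable $\eta_k$ is independent of $\mathcal{F}_{k-1}$ (even though it may depend on $\xi_k$), so $\me\bigl[\1_{\{T_k \leq t\}}\,\big|\,\mathcal{F}_{k-1}\bigr] = F(t - S_{k-1})\1_{\{S_{k-1}\leq t\}}$. Writing $F = 1 - \bar F$ and summing one obtains
\[
Y(t) - \mu^{-1}\int_0^t F(y)\,\mathrm{d}y = \bigl[\nu(t) - \mu^{-1}t\bigr] + M(t) - R(t),
\]
where $M(t) := \sum_{k\geq 1}\bigl[\1_{\{T_k\leq t\}} - F(t-S_{k-1})\1_{\{S_{k-1}\leq t\}}\bigr]$ is a sum of martingale differences in $k$ and $R(t) := \int_{[0,t]}\bar F(t-y)\,\mathrm{d}\bigl[\nu(y) - \mu^{-1}y\bigr]$. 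It remains to prove that $M(t)$ and $R(t)$ are both $o\bigl((t\log\log t)^{1/2}\bigr)$ a.s.

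For $M$, the conditional quadratic variation is bounded by $W(t) := \sum_{k\geq 1}\bar F(t-S_{k-1})\1_{\{S_{k-1}\leq t\}}$, and the bare fact that $\bar F(y)\to 0$ as $y\to\infty$ (which uses only a.s.\ finiteness of $\eta$, no moment whatsoever) suffices to force $W(t) = o(t)$ a.s.: splitting the sum at $S_{k-1}\leq t-c$ versus $S_{k-1}\in(t-c,t]$ gives $W(t)\leq \bar F(c)\nu(t) + [\nu(t)-\nu(t-c)]$, so $\limsup W(t)/t \leq \bar F(c)/\mu$ a.s.\ for each $c$, and $c\to\infty$ concludes. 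A Heyde-type martingale LIL then yields $M(t) = O\bigl((W(t)\log\log W(t))^{1/2}\bigr) = o\bigl((t\log\log t)^{1/2}\bigr)$ a.s. For $R(t)$, integration by parts reduces matters to controlling $\int_0^t [\nu(t-u) - \mu^{-1}(t-u)]\,\mathrm{d}F(u)$; splitting the integration interval at $(1-\varepsilon)t$ and combining $\bar F((1-\varepsilon)t)\to 0$ with the LIL upper bound $|\nu(y) - \mu^{-1}y|\leq C(y\log\log y)^{1/2}$ a.s.\ produces the required $o\bigl((t\log\log t)^{1/2}\bigr)$ bound after $\varepsilon\to 0$.

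The main obstacle is the $R(t)$ estimate: it is precisely here that the moment condition $\me\eta^a<\infty$ in Iksanov, Jedidi and Bouzeffour (2017) played its role, by bounding certain tail integrals of $\bar F$, and the novelty is to replace this by the trivial fact $\bar F(y)\to 0$. Once integer $t$ is treated, the extension to real $t$ is routine, since $Y$ is non-decreasing and the centering is Lipschitz with constant $\mu^{-1}$, so the oscillation of the normalized expression over a unit interval tends to zero.
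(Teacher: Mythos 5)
Your decomposition is algebraically equivalent to the paper's: writing $X(t)=Y(t)-\int_{[0,t]}F(t-y)\,{\rm d}\nu(y)$ and $Z(t)=\int_{[0,t]}F(t-y)\,{\rm d}(\nu(y)-\mu^{-1}y)$, one has $M(t)=X(t)$ and $Z(t)=(\nu(t)-\mu^{-1}t)-R(t)$. The paper cites the LIL for $Z(n)$ from Iksanov--Jedidi--Bouzeffour (noting it never used the moment condition on $\eta$) and concentrates on $X$; you instead further split $Z$ and try to derive everything from the renewal LIL. Both of your sub-claims are true, but both of your sketches have genuine gaps.

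On $R(t)$: the integration by parts does not reduce matters to $\int_0^t(\nu(t-u)-\mu^{-1}(t-u))\,{\rm d}F(u)$; it produces a boundary term $\bar F(0)\,D(t)=D(t)$ (with $D:=\nu-\mu^{-1}\cdot$) which is of exact order $a(t):=(2\sigma^2\mu^{-3}t\log\log t)^{1/2}$, and the remaining integral $\int_{(0,t]}D(t-u)\,{\rm d}F(u)$ is likewise $\Theta(a(t))$. Their difference cancels; written correctly one has $R(t)=D(t)\bar F(t)+\int_{(0,t]}\bigl(D(t)-D(t-u)\bigr)\,{\rm d}F(u)$. Even after rewriting, your split at $(1-\varepsilon)t$ is the wrong cut: on the range $u\le(1-\varepsilon)t$ the best uniform bound on $|D(t)-D(t-u)|$ coming from the LIL alone is $O(a(t))$, not $o(a(t))$. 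One must split at a \emph{fixed} $L$, control $\nu(t)-\nu(t-L)-\mu^{-1}L$ on the constant-length window (this is exactly the paper's Lemma~\ref{lem:aim}, i.e.\ formula (33) of Alsmeyer--Iksanov--Marynych, which has no hypothesis on $\eta$), and only then let $L\to\infty$ to exploit $\bar F(L)\to 0$. As sketched, your argument would prove $\int_0^t D(t-u)\,{\rm d}F(u)=o(a(t))$, which is false.

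On $M(t)=X(t)$: your observation that the conditional quadratic variation satisfies $W(t)=o(t)$ a.s.\ is exactly the paper's \eqref{eq:inter2} and your proof of it is the same. However, ``a Heyde-type martingale LIL'' cannot simply be invoked. The process $(M(t))_{t>0}$ is not a martingale in $t$; for each fixed $t$ it is the terminal value of a \emph{different} martingale (in $k$). The paper's actual argument is: build, for each fixed $t$, the exponential supermartingale $W_j$ in \eqref{eq:principal} (a Freedman-type inequality for increments bounded by $1$), deduce $\mmp\{X(t_n)>\varepsilon\,b(t_n)\}\le (\log t_n)^{-2}$ along the subsequence $t_n=\exp(n^{3/4})$, apply Borel--Cantelli, and then interpolate between $t_n$ and $t_{n+1}$ via a dyadic chaining argument combined with Burkholder--Davis--Gundy moment bounds \eqref{eq:mom}. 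This interpolation step (establishing \eqref{princip}) is a substantial portion of the proof and has no analogue in a one-line appeal to a martingale LIL. Your sketch identifies the right quantities but leaves essentially all of the hard work unaddressed.
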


Next, we consider a general branching process generated by the random sequence $(T_k)_{k\geq 1}$. Thus, the random variables $T_1, T_2, \ldots$ are interpreted as the birth times of the first generation individuals. The first generation produces the second generation. The shifts of birth times of the second generation individuals with respect to their mothers’ birth times are distributed according to copies of $T$, and for different mothers these copies are independent. The second generation produces the third one, and so on.

Let $Y_j(t)$ be the number of the $j$th generation individuals with birth times $\leq t$. Following \cite{Bohun etal:2022}, we call the sequence of processes $((Y_j(t))_{t\geq 0})_{j\geq 2}$ an iterated perturbed random walk. Note that, for $t \geq 0$,  $Y_1(t) = Y(t)$ and the following decomposition holds
\begin{equation}\label{eq:ndecomp}
Y_j(t)  = \sum_{r\geq 1} Y_{j-1}^{(r)} (t-T_r) \1_{\{T_r \leq t\}}, \quad j\geq 2,
\end{equation}
where $Y_{j-1}^{(r)}(t)$ is the number of  the $j$th generation individuals who are descendants of the
first generation individual with birth time $T_r$. Put $V(t) := V_1(t) = \me Y(t)$ for $t \geq 0$. Passing in \eqref{eq:ndecomp} to expectations we infer, for $j \geq 2$ and $t \geq 0$,
\begin{equation}\label{eq:vdecomp}
V_j(t) = (V_{j-1}\ast V)(t) = \int_{[0,\,t]} V_{j-1} (t-y){\rm d} V(y).
\end{equation}

The iterated perturbed random walks are interesting objects on their own, see \cite{Iksanov+Marynych+Rashytov:2022, Iksanov+Rashytov+Samoilenko:2023}. Also, these are the main auxiliary tool in investigations of nested infinite occupancy schemes in random environment. Details can be found in the papers 
\cite{Braganets+Iksanov:2023, Braganets+Iksanov:2025, Buraczewski+Dovgay+Iksanov:2020, Iksanov+Marynych+Samoilenko:2022}. 
Attention was also paid to iterated standard random walks, which are a rather particular instance of the iterated perturbed random walks which corresponds to $\eta=\xi$. 
A LIL for the iterated standard random walks was recently proved in \cite{Iksanov+Kabluchko+Kotelnikova:2025}. Continuing this line of investigation we formulate and prove a LIL for $Y_j(t)$, properly normalized and centered, as $t\to\infty$. 
\begin{thm}\label{main}
Assume that $\sigma^2={\rm Var}\,\xi\in (0,\infty)$. Then, for $j\geq 2$,
\begin{equation}\label{eq:main}
C\bigg(\bigg(\frac{ Y_j(t)-V_j(t)}{  (2((2j-1)(j-1)!)^{-1} \sigma^2 \mu^{-2j-1} t^{2j-1}\log\log t)^{1/2} }\,:\,t > \eee\bigg)\bigg)=[-1,1]\quad\text{{\rm
a.s.}} ,
\end{equation}
where $\mu=\me \xi<\infty$.
\end{thm}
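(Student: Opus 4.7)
The plan is to argue by induction on $j\geq 2$, with Theorem~\ref{imp2} providing the base case $j=1$. For the inductive step I would use the decomposition
$$Y_j(t)-V_j(t)= I_j(t)+R_j(t),$$
$$I_j(t):=\int_{[0,t]}V_{j-1}(t-y)\,{\rm d}(Y(y)-V(y)),\qquad R_j(t):=\sum_{r\ge 1}\bigl[Y_{j-1}^{(r)}(t-T_r)-V_{j-1}(t-T_r)\bigr]\1_{\{T_r\le t\}}.$$
Here $I_j$ is driven by the fluctuations of the first-generation counting process $Y$ (controlled by Theorem~\ref{imp2}), while $R_j$ is a sum of conditionally centered sub-family fluctuations given $\mathcal{F}_T:=\sigma(T_1,T_2,\dots)$ that will be shown to be of strictly smaller order. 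Hence $I_j$ should carry the full LIL.

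For $I_j$, integration by parts (whose boundary terms vanish since $V_{j-1}(0)=0=Y(0)-V(0)$) yields
$$I_j(t)=\int_0^t\bigl(Y(y)-V(y)\bigr)\,v_{j-1}(t-y)\,{\rm d}y,$$
where $v_{j-1}$ is the density of the $(j-1)$-fold convolution $V_{j-1}$, with the asymptotic $v_{j-1}(y)\sim \mu^{-(j-1)}y^{j-2}/(j-2)!$. A Strassen-type functional enhancement of Theorem~\ref{imp2}---which under ${\rm Var}\,\xi<\infty$ reduces to Strassen's invariance principle for the underlying standard random walk, the $O(1)$ perturbation by $\eta$ being irrelevant at the LIL scale---would allow $Y(y)-V(y)$ to be replaced by $\sigma\mu^{-3/2}B(y)$ inside the integral, modulo an $o\bigl((t^{2j-1}\log\log t)^{1/2}\bigr)$ error. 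The LIL for $I_j$ then reduces to one for $\int_0^t B(y)(t-y)^{j-2}\,{\rm d}y$, i.e.\ for a rescaled $(j-1)$-fold iterated integral of Brownian motion. After the log-time change $\tau=\log t$ and the rescaling by $t^{j-1/2}$, the latter becomes a stationary Gaussian process with exponentially decaying covariance, and the standard LIL for such processes together with a direct variance computation delivers the constant appearing in~\eqref{eq:main}.

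For $R_j$, conditioning on $\mathcal{F}_T$ makes it a sum of independent centered variables. The induction hypothesis, reinforced by a second-moment bound ${\rm Var}(Y_{j-1}(s)-V_{j-1}(s))=O(s^{2j-3})$ (extracted from the inductive LIL by a truncation/Doob argument), gives ${\rm Var}(R_j(t)\mid\mathcal{F}_T)=O\bigl(\int_0^t(t-y)^{2j-3}\,{\rm d}Y(y)\bigr)=O(t^{2j-2})$ a.s., since $Y(t)\sim\mu^{-1}t$. A conditional Chebyshev inequality along a geometric subsequence $t_n=\theta^n$ followed by a monotonicity argument to fill the gaps would then yield $R_j(t)=o\bigl(t^{j-1/2}(\log\log t)^{1/2}\bigr)$ a.s., which is strictly negligible relative to the LIL normalization. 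The main obstacle will be the functional-LIL upgrade of Theorem~\ref{imp2} just mentioned: in the absence of moment hypotheses on $\eta$ one cannot expect a KMT-type strong approximation, but Strassen's framework together with the boundedness of $v_{j-1}$ should suffice to transfer the LIL across the integral defining $I_j$.
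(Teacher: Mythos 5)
Your decomposition $Y_j-V_j=I_j+R_j$ coincides with the paper's \eqref{eq:decomp} ($R_j$ is the paper's $Z_j$, and $I_j$ is the paper's $\sum_k V_{j-1}(t-T_k)-V_j(t)$), and your treatment of $R_j$ via the second-moment bound ${\rm Var}(Y_{j-1}(s)-V_{j-1}(s))=O(s^{2j-3})$, Chebyshev along a subsequence and monotonicity interpolation is exactly what the paper does in Lemma~\ref{lem:var} and Proposition~\ref{second}. The genuine gap is in your handling of $I_j$, and it occurs at precisely the step the paper was written to circumvent.

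First, the integration by parts $I_j(t)=\int_0^t(Y(y)-V(y))\,v_{j-1}(t-y)\,{\rm d}y$ presupposes that $V_{j-1}$ is absolutely continuous. It need not be: $V(t)=\sum_{k\ge 1}\mmp\{T_k\le t\}$ can be a pure jump function (e.g.\ if $\xi$ and $\eta$ are lattice), and then $V_{j-1}=V^{\ast(j-1)}$ has no density. The paper creates a usable density by pre-convolving with an exponential law, replacing $F$ by $H=F\ast G$, and then has to prove that this smoothing is innocuous at the $t^{j-1/2}$ scale (\eqref{eq:replace1}--\eqref{eq:replace2}); the a.s.\ part \eqref{eq:replace1} is not a triviality and requires Lemma~\ref{lem:aux}, which in turn rests on Lemma~\ref{lem:aim}. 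None of this is visible in your sketch.

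Second, and more seriously, the phrase ``a Strassen-type functional enhancement of Theorem~\ref{imp2}\dots the $O(1)$ perturbation by $\eta$ being irrelevant at the LIL scale'' is exactly the claim the paper shows one cannot make. Under the paper's hypotheses $\eta$ has no moments at all, and the text explicitly points out that $n^{-1/2}\bigl(Y(n)-\int_{[0,n]}F(n-y)\,{\rm d}\nu(y)\bigr)\to 0$ can fail without $\me\eta^a<\infty$. Theorem~\ref{imp2} yields only the cluster-set statement plus \eqref{eq:final}, i.e.\ $Y(t)-(F\ast\nu)(t)=o\bigl((t\log\log t)^{1/2}\bigr)$ a.s.; it does not give, and no argument in the paper gives, a strong approximation of $Y-V$ by a Brownian motion. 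Substituting $\sigma\mu^{-3/2}B(y)$ for $Y(y)-V(y)$ inside the integral with an $o\bigl((t^{2j-1}\log\log t)^{1/2}\bigr)$ error would require precisely such a strong approximation. The paper's workaround is to split $I_j=A_1+A_2$ once more: $A_1(t)=\int_{[0,t]}\bigl(Y(t-x)-(F\ast\nu)(t-x)\bigr)\,{\rm d}V_{j-1}(x)$ is killed by \eqref{eq:final} and the growth of $V_{j-1}$, while $A_2(t)=\int_{[0,t]}(\nu(t-x)-U(t-x))\,{\rm d}(F\ast V_{j-1})(x)$ involves only the \emph{standard} renewal process $\nu-U$, for which the strong approximation \eqref{eq:brownian} \emph{is} available. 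It is $A_2$, not $I_j$ itself, that is compared to a Brownian integral. Without this extra splitting your plan stalls. (Also, unlike the paper, you organize the argument as an induction on $j$; the paper needs induction only for the auxiliary variance bound, and treats every $j\ge 2$ directly.)
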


Although the beginning of our proof of Theorem \ref{main} is similar to that of Theorem 1.1 in \cite{Iksanov+Kabluchko+Kotelnikova:2025}, the subsequent 
technical details are essentially different. The main difficulty is that the distribution of $\eta$ is arbitrary. Imposing a moment assumption on the distribution of 
$\eta$ would greatly simplify an argument. 

The remainder of the paper is structured as follows. After proving Theorem \ref{imp2} in Section \ref{sect:lil}, we give a number of auxiliary
results in Section \ref{sect:aux} and then prove Theorem \ref{main} in Section 4.

\section{Proof of Theorem \ref{imp2}}\label{sect:lil}

We shall denote by $n$ an integer argument and by $t$ a real argument. For $t\in\mr$, put $F(t):=\mmp\{\eta\leq t\}$ and
\begin{equation}\label{def:nu}
\nu(t):=\sum_{k\geq 0}\1_{\{S_k\leq t\}},
\end{equation}
 and observe that $F(t)=0$ and $\nu(t)=0$ for $t<0$. For $t>\eee$, 
 write
\begin{multline*}
Y(t)-\mu^{-1}\int_0^t F(y){\rm d}y=Y(t)-\int_{[0,\,t]}F(t-y){\rm d}\nu(y)\\+\int_{[0,\,t]}F(t-y){\rm d}(\nu(y)-\mu^{-1}y)=:X(t)+Z(t)
\end{multline*}
and put $a(t):=(2\sigma^2\mu^{-3}t\log\log t)^{1/2}$. It is shown in the proof of Proposition 2.3 in \cite{Iksanov+Jedidi+Bouzzefour:2017} that
\begin{equation}\label{inter1}
C\big(\big(Z(n)/a(n)\ : \ n\geq 3\big)\big)=[-1,1]\quad\text{a.s.}
\end{equation}
This result holds irrespective of whether $\me \eta^a<\infty$ for some $a>0$ or $\me \eta^a=\infty$ for all $a>0$. We intend to show that \eqref{inter1} entails
\begin{equation}\label{inter11}
C\big(\big(Z(t)/a(t)\ : \ t>\eee 
\big)\big)=[-1,1]\quad\text{a.s.}
\end{equation}
Given $t\geq 4$ 
there exists $n\in\mn$ such that $t\in (n-1, n]$. Hence, by monotonicity, $$\frac{Z(t)}{a(t)}\leq \frac{Z(n)+\mu^{-1}\int_{n-1}^n F(y){\rm d}y}{a(n-1)}\leq \frac{Z(n)+\mu^{-1}}{a(n-1)}\quad\text{a.s.}$$ Analogously, $$\frac{Z(t)}{a(t)}\geq \frac{Z(n-1)-\mu^{-1}}{a(n)}\quad \text{a.s.}$$ We conclude that \eqref{inter11} does indeed hold.

It is known (see the proof of Theorem 3.2 in \cite{Alsmeyer+Iksanov+Marynych:2017}) that $$\lim_{n\to\infty}\,n^{-1/2}\bigg(Y(n)-\int_{[0,\,n]}F(n-y){\rm d}\nu(y)\bigg)=0\quad\text{a.s.}$$ whenever $\me \eta^a<\infty$ for some $a>0$. We note, without going into details that the latter limit relation may fail to hold if $\me \eta^a=\infty$ for all $a>0$. For instance, if $\lim_{t\to\infty}(\log\log t)(1-F(t))=+\infty$, then the limit superior in the last centered formula is equal to $+\infty$ a.s. The proof of Theorem 3.2 in \cite{Alsmeyer+Iksanov+Marynych:2017} operates with power moments and relies heavily upon the assumption $\me \eta^a<\infty$ for some $a>0$. Without such an assumption another argument is needed, which operates with exponential rather than power moments.
In the remainder of the proof we present such an argument, which enables us to prove that
\begin{equation}\label{eq:final}
\lim_{t\to\infty}(X(t)/b(t))=0\quad\text{a.s.},
\end{equation}
thereby completing the proof of the theorem. Here, $b(t):=(t\log\log t)^{1/2}$ for $t>\eee$. 

Fix any $u\neq 0$ and $t>0$. Put $W_0:=1$ and, for $j\in\mn$,
\begin{multline*}
W_j:=\exp\Big(u\sum_{k=0}^{j-1}(\1_{\{\eta_{k+1}+S_k\leq t\}}-F(t-S_k)\1_{\{S_k\leq t\}})\\-(u^2\eee^{|u|}/2)\sum_{k=0}^{j-1}(1-F(t-S_k))\1_{\{S_k\leq t\}}\Big),
\end{multline*}
and denote by $\mathcal{G}_0$ the trivial $\sigma$-algebra and, for $j\in\mn$, by $\mathcal{G}_j$ the $\sigma$-algebra generated by $(\xi_k,\eta_k)_{1\leq k\leq j}$. Observe that the variable $W_j$ is $\mathcal{G}_j$-measurable for $j\in\mn_0:=\mn\cup\{0\}$. Now we prove that $(W_j, \mathcal{G}_j)_{j\geq 0}$ is a positive supermartingale. Indeed, writing $\me_j(\cdot)$ for $\me (\cdot|\mathcal{G}_j)$ and using the inequality $\eee^x\leq 1+x+x^2\eee^{|x|}/2$ for $x\in\mr$ in combination with $$\me_{j-1}\big(\1_{\{\eta_j+S_{j-1}\leq t\}}-F(t-S_{j-1})\1_{\{S_{j-1}\leq t\}}\big)=0\quad\text{a.s.}$$ we infer
\begin{multline*}
\me_{j-1}\exp\big(u(\1_{\{\eta_j+S_{j-1}\leq t\}}-F(t-S_{j-1})\1_{\{S_{j-1}\leq t\}})\big)\\\leq 1+(u^2/2)\me_{j-1}(\1_{\{T_j\leq t\}}-F(t-S_{j-1}))^2 \\ \times \exp(|u(\1_{\{T_j\leq t\}}-F(t-S_{j-1})\1_{\{S_{j-1}\leq t\}})|)\1_{\{S_{j-1}\leq t\}}.
\end{multline*}
In view of $|\1_{\{T_j\leq t\}}-F(t-S_{j-1})\1_{\{S_{j-1}\leq t\}}|\leq 1$ a.s., the right-hand side does not exceed
\begin{multline*}
1+(u^2\eee^{|u|}/2)F(t-S_{j-1})(1-F(t-S_{j-1}))\1_{\{S_{j-1}\leq t\}}\\\leq 1+(u^2\eee^{|u|}/2)(1-F(t-S_{j-1}))\1_{\{S_{j-1}\leq t\}}\\ \leq \exp((u^2\eee^{|u|}/2)(1-F(t-S_{j-1}))\1_{\{S_{j-1}\leq t\}}).
\end{multline*}
For the latter inequality we have used $1+x\leq \eee^x$ for $x\geq 0$. Thus, we have proved that, for $j\in\mn$, $\me_{j-1}(W_j/W_{j-1})\leq 1$ a.s. and thereupon $\me_{j-1}W_j\leq W_{j-1}$ a.s., that is, $(W_j, \mathcal{G}_j)_{j\geq 0}$ is indeed a positive supermartingale. As a consequence, the a.s.\ limit
\begin{equation*}
\lim_{j\to\infty}W_j=:W_\infty=\exp\Big(uX(t)-(u^2\eee^{|u|}/2)\sum_{k\geq 0}(1-F(t-S_k))\1_{\{S_k\leq t\}}\Big)
\end{equation*}
satisfies $\me W_\infty\leq \me W_0=1$. In other words, with $u\in\mr$ and $t>0$ fixed,
\begin{equation}\label{eq:principal}
\me\exp\Big(uX(t)-(u^2\eee^{|u|}/2)\sum_{k\geq 0}(1-F(t-S_k))\1_{\{S_k\leq t\}}\Big)\leq 1.
\end{equation}

We shall also need another auxiliary result:
\begin{equation}\label{eq:inter2}
\lim_{t\to\infty}t^{-1}\sum_{k\geq 0}(1-F(t-S_k))\1_{\{S_k\leq t\}}=0\quad\text{a.s.}
\end{equation}
To prove \eqref{eq:inter2}, write, for fixed $a>0$ and $t>a$,
\begin{multline*}
\sum_{k\geq 0}(1-F(t-S_k))\1_{\{S_k\leq t\}}=\sum_{k\geq 0}(1-F(t-S_k))\1_{\{S_k\leq t-a\}}\\+\sum_{k\geq 0}(1-F(t-S_k))\1_{\{t-a<S_k\leq t\}}\leq (1-F(a))\nu(t)+(\nu(t)-\nu(t-a)).
\end{multline*}
By the strong law of large numbers for renewal processes, $\lim_{t\to\infty}t^{-1}\nu(t)=\mu^{-1}$ a.s. and $\lim_{t\to\infty}t^{-1}(\nu(t)-\nu(t-a))=\mu^{-1}-\mu^{-1}=0$ a.s. Hence, for each fixed $a>0$, $$\limsup_{t\to\infty}t^{-1}\sum_{k\geq 0}(1-F(t-S_k))\1_{\{S_k\leq t\}}\leq \mu^{-1}(1-F(a))\quad\text{a.s.}$$ Letting $a\to\infty$ we arrive at \eqref{eq:inter2}.

Fix any $\varepsilon>0$ and put $t_n:=\exp(n^{3/4})$ for $n\in\mn$. We intend to prove that
\begin{equation}\label{eq:semifinal}
\lim_{n\to\infty}(X(t_n)/b(t_n))=0\quad\text{a.s.}
\end{equation}
To this end, for $n\geq 3$, define the event $$A_n:=\{X(t_n)>\varepsilon b(t_n)\}.$$ In view of \eqref{eq:inter2}, for large $n$, $$\sum_{k\geq 0}(1-F(t_n-S_k))\1_{\{S_k\leq t_n\}}\leq (\varepsilon^2/8)t_n.$$ Using this we obtain, for any $u>0$ and large $n$,
\begin{multline*}
A_n=\{uX(t_n)-(u^2\eee^{|u|}/2)\sum_{k\geq 0}(1-F(t_n-S_k))\1_{\{S_k\leq t_n\}}\\>\varepsilon u b(t_n)-(u^2\eee^{|u|}/2)\sum_{k\geq 0}(1-F(t_n-S_k))\1_{\{S_k\leq t_n\}}\}\\\subseteq \{uX(t_n)-(u^2\eee^{|u|}/2)\sum_{k\geq 0}(1-F(t_n-S_k))\1_{\{S_k\leq t_n\}} \\ >\varepsilon u b(t_n)-(\varepsilon^2/8)(u^2\eee^{|u|}/2)t_n\}=:B_n.
\end{multline*}
Invoking Markov's inequality in combination with \eqref{eq:principal} we infer
\begin{multline*}
\mmp(B_n)\leq \exp\Big(-\varepsilon u b(t_n)+(\varepsilon^2/8)(u^2\eee^{|u|}/2)t_n\Big)\\\times\me \exp\Big(uX(t_n)-(u^2\eee^{|u|}/2)\sum_{k\geq 0}(1-F(t_n-S_k))\1_{\{S_k\leq t_n\}}\Big)\\\leq \exp\Big(-\varepsilon u b(t_n)+(\varepsilon^2/8)(u^2\eee^{|u|}/2)t_n\Big).
\end{multline*}
Let $\rho>0$ satisfy $\exp(8\varepsilon^{-1}\rho)=3/2$. For large $x>0$, $x^{-1}\log\log x\leq \rho$. Put $$u=8\varepsilon^{-1}(t_n^{-1}\log\log t_n)^{1/2}.$$ Then $$-\varepsilon u b(t_n)+(\varepsilon^2/8)(u^2\eee^{|u|}/2)t_n\leq -8\log\log t_n+4\eee^{8\varepsilon^{-1}\rho}\log\log t_n=-2\log\log t_n.$$ Hence, by the Borel-Cantelli lemma, $\limsup_{n\to\infty}(X(t_n)/b(t_n))\leq 0$ a.s. The converse inequality for the lower limit follows analogously. Start with $A_n^\ast:=\{-X(t_n)>\varepsilon b(t_n)\}$ and show, by the same reasoning as above, that $A_n^\ast\subseteq B_n^\ast$, where $B_n^\ast$ only differs from $B_n$ by the term $-uX(t_n)$ in place of $uX(t_n)$.

It remains to show that \eqref{eq:semifinal} can be lifted to \eqref{eq:final}. To this end, it suffices to prove that
\begin{equation}\label{princip}
\lim_{n\to\infty}\frac{\sup_{u\in [t_n,\,t_{n+1}]}\,|X(u)-X(t_n)|}{t_n^{1/2}}=0\quad\text{a.s.}
\end{equation}
Indeed, \eqref{princip} in combination with \eqref{eq:semifinal} entails $$\lim_{n\to\infty}\frac{\sup_{u\in [t_n,\,t_{n+1}]}\,|X(u)|}{b(t_n)}=0\quad\text{a.s.}$$ This ensures \eqref{eq:final} because, for large enough $n$, $$\frac{|X(t)|}{b(t)}\leq \frac{\sup_{u\in [t_n,\,t_{n+1}]}\,|X(u)|}{b(t_n)}\quad\text{a.s.}$$ whenever $t\in [t_n,\,t_{n+1}]$.

We denote by $I=I_n$ a positive integer to be chosen later. For $j\in\mn_0$ and $n\in\mn$, put $$F_j(n):=\{v_{j,m}(n):=t_n+2^{-j}m(t_{n+1}-t_n):0\leq m\leq 2^j\}.$$ In what follows, we write $v_{j,\,m}$ for $v_{j,\,m}(n)$. Observe that $F_j(n)\subseteq F_{j+1}(n)$. For any $u\in [t_n, t_{n+1}]$, put
$$
u_j:=\max\{v\in F_j(n): v\leq u\}=t_n+2^{-j}(t_{n+1}-t_n)\left\lfloor\frac{2^j(u-t_n)}{t_{n+1}-t_n}\right\rfloor.
$$
An important observation is that either $u_{j-1}=u_j$ or $u_{j-1}=u_j-2^{-j}(t_{n+1}-t_n)$. Necessarily, $u_j=v_{j,m}$ for some $0\leq m\leq 2^j$, so that either $u_{j-1}=v_{j,m}$ or $u_{j-1}=v_{j,m-1}$. Write
\begin{multline*}
\sup_{u\in [t_n,\, t_{n+1}]}|X(u)-X(t_n)|\\= \max_{0\leq j\leq 2^I-1}\sup_{z\in [0,\,v_{I,\,j+1}-v_{I,\,j}]}|(X(v_{I,\,j})-X(t_n))+(X(v_{I,\,j}+z)-X(v_{I,\,j}))|\\\leq \max_{0\leq j\leq 2^I-1}|X(v_{I,\,j})-X(t_n)|\\+\max_{0\leq j\leq 2^I-1}\sup_{z\in [0,\,v_{I,\,j+1}-v_{I,\,j}]}|X(v_{I,\,j}+z)-X(v_{I,\,j})|\quad\text{a.s.} 
\end{multline*}
For $u\in F_I(n)$,
\begin{multline*}
|X(u)-X(t_n)| = \Big|\sum_{j=1}^I (X(u_j)-X(u_{j-1}))+X(u_0)-X(t_n)\Big|\\\leq \sum_{j=0}^I \max_{1\leq m\leq 2^j}\,|X(v_{j,\,m})-X(v_{j,\,m-1})|.
\end{multline*}
With this at hand, we obtain
\begin{multline}
\sup_{u\in [t_n,\, t_{n+1}]}|X(u)-X(t_n)|\leq \sum_{j=0}^I \max_{1\leq m\leq 2^j}\,|X(v_{j,\,m})-X(v_{j,\,m-1})|\\+
\max_{0\leq j\leq 2^I-1}\sup_{z\in [0,\,v_{I,\,j+1}-v_{I,\,j}]}|X(v_{I,\,j}+z)-X(v_{I,\,j})|\quad\text{a.s.}\label{eq:inter15}
\end{multline}

We first show that, for all $\varepsilon>0$,
\begin{equation}\label{eq:inter3}
\sum_{n\geq 1}\mmp\Big\{\sum_{j=0}^I \max_{1\leq m\leq 2^j}\,|X(v_{j,\,m})-X(v_{j,\,m-1})|>\varepsilon t_n^{1/2}\Big\}<\infty.
\end{equation}
Let $\ell\in\mn$. As a preparation, we derive an appropriate upper bound for $\me (X(u)-X(v))^{2\ell}$ for $u,v>0$, $u>v$. Observe that $X(u)-X(v)$ is equal to the a.s. limit $\lim_{j\to\infty} R(j,u,v)$, where $(R(j,u,v), \mathcal{G}_j)_{j\geq 0}$ is a martingale defined by $$R(0, u,v):= 0, \quad R(j, u,v):=\sum_{k=0}^{j-1}(\1_{\{v<\eta_{k+1}+S_k\leq u\}}-F(u-S_k)+F(v-S_k)),\quad j\in \mn,  $$ and, as before, $\mathcal{G}_0$ denotes the trivial $\sigma$-algebra and, for $j\in\mn$, $\mathcal{G}_j$ denotes the $\sigma$-algebra generated by $(\xi_k,\eta_k)_{1\leq k\leq j}$. Recall that $F(t)=0$ for $t<0$. By the Burkholder–Davis–Gundy inequality, see, for instance, Theorem 11.3.2 in \cite{Chow+Teicher:2003},
\begin{multline*}
\me (X(u)-X(v))^{2\ell} \leq C\Big(\me \Big(\sum_{k\geq 0}\me\big((R(k+1, u,v)-R(k, u,v))^2|\mathcal{G}_k\big)\Big)^\ell \\ +\sum_{k\geq 0}\me (R(k+1, u,v)-R(k, u,v))^{2\ell}\Big)\\=C\Big(\me \Big(\sum_{k\geq 0}(F(u-S_k)-F(v-S_k))(1-F(u-S_k)+F(v-S_k))\Big)^\ell\\+ \sum_{k\geq 0}\me(\1_{\{v<\eta_{k+1}+S_k\leq u\}}-F(u-S_k)+F(v-S_k))^{2\ell}\Big)=: C(A(u,v)+B(u,v))
\end{multline*}
for a positive constant $C$. Let $f: [0,\infty)\to[0,\infty)$ be a locally bounded function. It is shown in the proof of Lemma A.3 in \cite{Alsmeyer+Iksanov+Marynych:2017} that $\me (\nu(1))^\ell<\infty$ and that
\begin{equation}\label{eq:aux}
\me\Big(\int_{[0,\,t]}f(t-y){\rm d}\nu(y)\Big)^\ell\leq \me (\nu(1))^\ell \Big(\sum_{n=0}^{\lfloor t\rfloor} \sup_{y\in [n,\,n+1)}\,f(y)\Big)^\ell.
\end{equation}
Further,
\begin{multline*}
A(u,v)=\me \Big(\int_{(v,\,u]}F(u-y)(1-F(u-y)){\rm d}\nu(y)\\+\int_{[0,\,v]}(F(u-y)-F(v-y))(1-F(u-y)+F(v-y)){\rm d}\nu(y)\Big)^\ell\\\leq 2^{\ell-1}\Big(\me \Big(\int_{(v,\,u]}F(u-y)(1-F(u-y)){\rm d}\nu(y)\Big)^\ell\\+ \me\Big(\int_{[0,\,v]}(F(u-y)-F(v-y))(1-F(u-y)+F(v-y)){\rm d}\nu(y)\Big)^\ell\\\leq 2^{\ell-1}\Big(\me\Big(\int_{[0,\,u]}\1_{[0,\,u-v)}(u-y){\rm d}\nu(y)\Big)^\ell+\me\Big(\int_{[0,\,v]}(F(u-y)-F(v-y)){\rm d}\nu(y)\Big)^\ell\Big)\\=:2^{\ell-1}(A_1(u,v)+A_2(u,v)).
\end{multline*}
Using \eqref{eq:aux} with $t=u$ and $f(y)=\1_{[0,\,u-v)}(y)$ and then with $t=v$ and $f(y)=F(u-v+y)-F(y)$ we infer $$A_1(u,v)\leq \me (\nu(1))^\ell \Big(\sum_{n=0}^{\lfloor u\rfloor}\sup_{y\in [n,\,n+1)}\,\1_{[0,\,u-v)}(y)\Big)^\ell=\me (\nu(1))^\ell (\lceil u-v\rceil)^\ell,$$ where $x\mapsto \lceil x\rceil$ is the ceiling function, and
\begin{multline*}
A_2(u,v)\leq \me (\nu(1))^\ell \Big(\sum_{n=0}^{\lfloor v\rfloor}\sup_{y\in [n,\,n+1)}\,(F(u-v+y)-F(y))\Big)^\ell\\\leq \me (\nu(1))^\ell \Big(\sum_{n=0}^{\lfloor v\rfloor}(F(\lceil u-v\rceil+n+1)-F(n))\Big)^\ell \\ =\me (\nu(1))^\ell \Big(\sum_{n=0}^{\lceil u-v\rceil}(F(\lfloor v\rfloor+1+n)-F(n))\Big)^\ell \leq \me (\nu(1))^\ell (\lceil u-v\rceil+1)^\ell.
\end{multline*}
Finally,
\begin{multline*}
B(u,v)\leq \sum_{k\geq 0}\me(\1_{\{v<\eta_{k+1}+S_k\leq u\}}-F(u-S_k)+F(v-S_k))^2\leq 2\me \nu(1) (\lceil u-v\rceil+1)\\\leq 2\me \nu(1) (\lceil u-v\rceil+1)^\ell
\end{multline*}
and thereupon
\begin{equation}\label{eq:mom}
\me (X(u)-X(v))^{2\ell}\leq C_1(\lceil u-v\rceil+1)^\ell.
\end{equation}
Note that $v_{j,\,m}-v_{j,\,m-1}=2^{-j}(t_{n+1}-t_n)$. Given $A>0$ there exists a constant $C_2>0$ such that $C_1(\lceil 2^{-j}(t_{n+1}-t_n)\rceil+1)^\ell\leq C_2 2^{-j\ell}(t_{n+1}-t_n)^\ell$. Put $I=I_n:=\lfloor\log_2(A^{-1}(t_{n+1}-t_n))\rfloor$. Invoking \eqref{eq:mom} we then obtain, for nonnegative integer $j\leq I$,
\begin{equation}\label{eq:inter17}
\me (X(v_{j,\,m})-X(v_{j,\,m-1}))^{2\ell}\leq C_1(\lceil 2^{-j}(t_{n+1}-t_n)\rceil+1)^\ell\leq C_2 2^{-j\ell}(t_{n+1}-t_n)^\ell
\end{equation}
and thereupon
\begin{multline*}
\me \big(\max_{1\leq m\leq 2^j}\,(X(v_{j,\,m})-X(v_{j,\,m-1}))^{2\ell}\big)\leq \sum_{m=1}^{2^j}\me(X(v_{j,\,m})-X(v_{j,\,m-1}))^{2\ell}\\\leq C_2 2^{-j(\ell-1)}(t_{n+1}-t_n)^\ell.
\end{multline*}
By the triangle inequality for the $L_{2\ell}$-norm,
\begin{multline*}
\me \Big(\sum_{j=0}^I\max_{1\leq m\leq 2^j}\,|X(v_{j,\,m})-X(v_{j,\,m-1})|\Big)^{2\ell}\\\leq \Big(\sum_{j=0}^I \big(\me \big(\max_{1\leq m\leq 2^j}\,(X(v_{j,\,m})-X(v_{j,\,m-1}))^{2\ell}\big)\big)^{1/(2\ell)}\Big)^{2\ell}\\\leq C_2(t_{n+1}-t_n)^\ell\big(\sum_{j\geq 0}2^{-j(\ell-1)/(2\ell)}\big)^{2\ell}=:C_3 (t_{n+1}-t_n)^\ell.
\end{multline*}
By Markov's inequality, $$\mmp\Big\{\sum_{j=0}^I \max_{1\leq m\leq 2^j}\,|X(v_{j,\,m})-X(v_{j,\,m-1})|>\varepsilon t_n^{1/2}\Big\}\leq C_3\varepsilon^{-2\ell}t_n^{-\ell}(t_{n+1}-t_n)^\ell.$$ Since $t_n^{-1}(t_{n+1}-t_n)\sim (3/4)n^{-1/4}$ as $n\to\infty$, \eqref{eq:inter3} follows upon setting $\ell=6$, say. Invoking the Borel-Cantelli lemma we infer $$\lim_{n\to\infty}\frac{\sum_{j=0}^I \max_{1\leq m\leq 2^j}\,|X(v_{j,\,m})-X(v_{j,\,m-1})|}{t_n^{1/2}}=0\quad\text{a.s.}$$

Now we pass to the analysis of the second summand in \eqref{eq:inter15}. Put $M(t):=\int_{[0,\,t]}F(t-y){\rm d}\nu(y)$ for $t\geq 0$. Using the equality $X(t)=N(t)-M(t)$ and a.s. monotonicity of $N$ and $M$ we infer
\begin{multline*}
\sup_{z\in [0,\,v_{I,\,j+1}-v_{I,\,j}]}|X(v_{I,\,j}+z)-X(v_{I,\,j})|\\\leq \sup_{z\in [0,\,v_{I,\,j+1}-v_{I,\,j}]}(N(v_{I,\,j}+z)-N(v_{I,\,j}))\\+\sup_{z\in [0,\,v_{I,\,j+1}-v_{I,\,j}]}\,(M(v_{I,\,j}+z)-M(v_{I,\,j}))\\=(N(v_{I,\,j+1})-N(v_{I,\,j}))+(M(v_{I,\,j+1})-M(v_{I,\,j})).
\end{multline*}
Observe that
\begin{multline*}
\max_{0\leq j\leq 2^I-1}\,(N(v_{I,\,j+1})-N(v_{I,\,j}))\leq \max_{0\leq j\leq 2^I-1}\,|X(v_{I,\,j+1})-X(v_{I,\,j})|\\+\max_{0\leq j\leq 2^I-1}\,(M(v_{I,\,j+1})-M(v_{I,\,j})).
\end{multline*}
Hence, according to the Borel-Cantelli lemma, it is enough to prove that, for all $\varepsilon>0$,
\begin{equation}\label{eq:inter4}
\sum_{n\geq 1}\mmp\{\max_{0\leq j\leq 2^I-1}\,(M(v_{I,\,j+1})-M(v_{I,\,j}))>\varepsilon t_n^{1/2}\}<\infty
\end{equation}
and
\begin{equation}\label{eq:inter5}
\sum_{n\geq 1}\mmp\{\max_{0\leq j\leq 2^I-1}\,|X(v_{I,\,j+1})-X(v_{I,\,j})|>\varepsilon t_n^{1/2}\Big\}<\infty.
\end{equation}
Arguing as above we conclude that, for $u,v>0$, $u>v$,
\begin{multline*}
\me (M(u)-M(v))^\ell=\me\Big(\int_{(v,\,u]}F(u-y){\rm d}\nu(y)+\int_{[0,\,v]}(F(u-y)-F(v-y)){\rm d}\nu(y)\Big)^\ell\\\leq 2^{\ell-1}\me (\nu(1))^\ell(\lceil u-v\rceil+1)^\ell.
\end{multline*}
As a consequence, for nonnegative integer $j\leq I$ and a constant $C_4>0$, $$\me (M(v_{I,\,j+1})-M(v_{I,\,j}))^\ell\leq C_4 2^{-I\ell}(t_{n+1}-t_n)^\ell.$$ By Markov's inequality and our choice of $I$,
\begin{multline*}
\mmp\{\max_{0\leq j\leq 2^I-1}\,(M(v_{I,\,j+1})-M(v_{I,\,j}))>\varepsilon t_n^{1/2}\}\leq C_4\varepsilon^{-\ell}2^{-I(\ell-1)}t_n^{-\ell/2}(t_{n+1}-t_n)^\ell\\\leq C_4\varepsilon^{-\ell}(2A)^{\ell-1}t_n^{-\ell/2}(t_{n+1}-t_n).
\end{multline*}
Hence, \eqref{eq:inter4} follows upon choosing $\ell>2$. To prove \eqref{eq:inter5}, we invoke \eqref{eq:inter17} which enables us to conclude that
\begin{multline*}
\mmp\{\max_{0\leq j\leq 2^I-1}\,|X(v_{I,\,j+1})-X(v_{I,\,j})|>\varepsilon t_n^{1/2}\} 
\leq C_2\varepsilon^{-2\ell}2^{-I(\ell-1)}t_n^{-\ell}(t_{n+1}-t_n)^\ell\\\leq C_2\varepsilon^{-2\ell}(2A)^{\ell-1}t_n^{-\ell}(t_{n+1}-t_n).
\end{multline*}
Choosing $\ell>1$ we arrive at \eqref{eq:inter5}.

The proof of Theorem \ref{imp2} is complete.

\section{Auxiliary results}\label{sect:aux}
To prove Theorem \ref{main}, we need some auxiliary results on the iterated perturbed random walks.
Lemma \ref{lem:elem} 
is a known result, see Assertion 1 in \cite{Iksanov+Rashytov+Samoilenko:2023}.
\begin{lemma}\label{lem:elem}
Assume that $\mu = \me \xi < \infty $. Then, for fixed $j \in \mn$,
\begin{equation}\label{eq:assympt}
\lim_{t \to \infty} \frac{V_j(t)}{t^j} = \frac{1}{j!\mu^j}.
\end{equation}
\end{lemma}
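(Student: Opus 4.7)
My plan is to prove the lemma by induction on $j$. For the base case $j=1$, I would exploit the independence of $\eta_k$ from $(\xi_1,\ldots,\xi_{k-1})$ to write
\begin{equation*}
V(t)=\int_{[0,\,t]}U(t-y)\,{\rm d}F(y),
\end{equation*}
where $U(s):=\sum_{k\geq 0}\mmp\{S_k\leq s\}$ is the renewal function of the standard random walk $S$ and $F$ is the distribution function of $\eta$. The elementary renewal theorem gives $U(s)/s\to\mu^{-1}$, and the standard linear bound $U(s)\leq c(1+s)$ on renewal functions allows a dominated convergence argument in $V(t)/t=\int_{[0,\,t]}(U(t-y)/t)\,{\rm d}F(y)$, producing $V(t)/t\to\mu^{-1}$.

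For the inductive step, I would assume \eqref{eq:assympt} for $j-1$ and apply the convolution identity \eqref{eq:vdecomp}. Rescaling with $y=tu$ yields
\begin{equation*}
\frac{V_j(t)}{t^j}=\int_{[0,\,1]}\frac{V_{j-1}(t(1-u))}{t^{j-1}}\,\frac{{\rm d}V(tu)}{t}.
\end{equation*}
The integrand converges pointwise on $[0,1)$ to $(1-u)^{j-1}/((j-1)!\mu^{j-1})$ by the inductive hypothesis, while the scaled measure ${\rm d}V(tu)/t$ on $[0,1]$ converges, by the base case, to $\mu^{-1}\,{\rm d}u$. A polynomial bound $V_k(t)\leq C_k(1+t)^k$, which I would establish by a side induction using \eqref{eq:vdecomp} together with the linear bound on $V$, legitimizes a dominated-convergence argument and gives
\begin{equation*}
\lim_{t\to\infty}\frac{V_j(t)}{t^j}=\int_0^1\frac{(1-u)^{j-1}}{(j-1)!\mu^{j-1}}\cdot\frac{{\rm d}u}{\mu}=\frac{1}{j!\mu^j}.
\end{equation*}

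The main technical obstacle is that the integrand depends on $t$ and its pointwise limit is discontinuous at the right endpoint $u=1$, so one cannot simply invoke weak convergence of measures against a fixed continuous test function. The remedy is to split the integration at $1-\delta$: on $[0,1-\delta]$ one has $s=t(1-u)\in[t\delta,t]$, so $V_{j-1}(s)/s^{j-1}$ converges to its limit uniformly in $u$ (convergence at infinity becomes uniform once we are bounded away from zero), and the integral over this range converges to the corresponding piece of the claimed limit; the remainder on $[1-\delta,1]$ is controlled by the polynomial bound together with $V(t)-V((1-\delta)t)\sim\delta t/\mu$, and is $O(\delta)$ uniformly in large $t$. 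Letting $\delta\downarrow 0$ closes the induction.
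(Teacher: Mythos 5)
The paper does not actually supply a proof of this lemma: it simply records the statement and cites it as ``Assertion 1 in Iksanov, Rashytov and Samoilenko (2023).'' So there is no internal argument against which to compare your proposal line by line. What you have written is a self-contained, direct proof, and I do not see a genuine gap in it.

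Your route is the natural elementary one. The base case via $V(t)=\int_{[0,\,t]}U(t-y)\,{\rm d}F(y)$, the elementary renewal theorem $U(s)/s\to\mu^{-1}$, and the linear bound $U(s)\le c(1+s)$ (a consequence of subadditivity) together with dominated convergence is clean, since $F$ is a probability measure and the integrand is bounded by a constant for $t\ge1$. For the inductive step, the rescaled convolution $V_j(t)/t^j=\int_{[0,1]}\big(V_{j-1}(t(1-u))/t^{j-1}\big)\,{\rm d}V(tu)/t$ with the $\delta$-split is exactly the right device: away from $u=1$ the inner argument $t(1-u)\ge\delta t\to\infty$, so the induction hypothesis applies uniformly, and you may pair it with the weak convergence ${\rm d}V(tu)/t\Rightarrow\mu^{-1}{\rm d}u$ (with $\{1-\delta\}$ a continuity point of the limit); near $u=1$ the side-induction bound $V_k(s)\le C_k(1+s)^k$ together with $\limsup_t(V(t)-V((1-\delta)t))/t=\delta/\mu$ makes the remainder $O(\delta^{\,j})$, which vanishes as $\delta\downarrow0$. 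One small remark: the genuine obstacle you point to is not really a jump of the pointwise limit at $u=1$ (indeed $(1-u)^{j-1}$ is continuous, and $V_{j-1}(0^+)/t^{j-1}\to0$ as well), but the non-uniformity of the convergence $V_{j-1}(s)/s^{j-1}\to1/((j-1)!\mu^{j-1})$ as the inner argument $s=t(1-u)$ fails to tend to infinity when $u\to1$; your splitting addresses precisely this, so the argument stands. As an alternative worth knowing, the cited reference's proof (and the shortest route in general) is via Laplace--Stieltjes transforms and Karamata's Tauberian theorem: $\widehat{V_j}(s)=\big(\widehat{F}(s)/(1-\me\,\eee^{-s\xi})\big)^{j}\sim(\mu s)^{-j}$ as $s\downarrow0$, which immediately yields $V_j(t)\sim t^j/(j!\mu^j)$. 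Your proof is longer but more elementary and avoids Tauberian theory; both are valid.
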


Put
\begin{equation}\label{def:u}
U(t) := \me \nu(t)=\sum_{k\geq 0}\mmp{\{S_k\leq t\}} \quad \text{for } t\geq 0,
\end{equation}
so that $U$ is the renewal function.
\begin{lemma}
For every $x, h >0$ and $k \in \mn$,
\begin{equation}\label{ineq:vdiff}
V_k(x+h)-V_k(x) \leq U(h) 
(V(x+h))^{k-1}.
\end{equation}
\end{lemma}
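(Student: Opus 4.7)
The plan is to induct on $k$. For the base case $k=1$, the claim reduces to $V(x+h)-V(x)\leq U(h)$. I would start from the representation
\begin{equation*}
V(t)=\sum_{m\geq 1}\mmp\{S_{m-1}+\eta_m\leq t\}=\int_{[0,\,t]}U(t-s)\,{\rm d}F(s),
\end{equation*}
obtained by conditioning each term on $\eta_m$ (which is independent of $S_{m-1}$) and summing. Splitting
\begin{equation*}
V(x+h)-V(x)=\int_{[0,\,x]}(U(x+h-s)-U(x-s))\,{\rm d}F(s)+\int_{(x,\,x+h]}U(x+h-s)\,{\rm d}F(s)
\end{equation*}
and invoking the subadditivity of the renewal function, $U(a+h)-U(a)\leq U(h)$ (which comes from the strong Markov property of $(S_j)$ applied at the first $S_j$ exceeding $a$), I bound each integrand by $U(h)$, producing $V(x+h)-V(x)\leq U(h)F(x+h)\leq U(h)$.

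For the inductive step, assume \eqref{ineq:vdiff} holds at level $k$. Using \eqref{eq:vdecomp} I would write
\begin{multline*}
V_{k+1}(x+h)-V_{k+1}(x)=\int_{[0,\,x]}(V_k(x+h-y)-V_k(x-y))\,{\rm d}V(y)\\+\int_{(x,\,x+h]}V_k(x+h-y)\,{\rm d}V(y).
\end{multline*}
The inductive hypothesis together with the monotonicity of $V$ bounds the first integrand by $U(h)V(x+h-y)^{k-1}\leq U(h)V(x+h)^{k-1}$, so the first summand is at most $U(h)V(x+h)^{k-1}V(x)$. For the second, the positivity of $\xi$ and $\eta$ makes all birth times of generation-$k$ individuals strictly positive, so $V_k(0)=0$; the inductive hypothesis applied at $x=0$ then gives $V_k(h)\leq U(h)V(h)^{k-1}$, whence $V_k(x+h-y)\leq V_k(h)\leq U(h)V(x+h)^{k-1}$ uniformly in $y\in(x,x+h]$. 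The second summand is consequently at most $U(h)V(x+h)^{k-1}(V(x+h)-V(x))$, and adding the two pieces yields $U(h)V(x+h)^k$, closing the induction.

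The only delicate ingredient is the base-case subadditivity bound on $U$; the rest is transparent telescoping made possible by the convolution structure of $V_k$. Note that no moment assumption on $\eta$ enters the argument, because the estimate is phrased in terms of $U$ and $V$ directly rather than their tails.
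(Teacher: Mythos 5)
Your proof is correct and takes essentially the same inductive route as the paper's, using the identical decomposition of $V_{k+1}(x+h)-V_{k+1}(x)$ into a part over $[0,\,x]$ controlled by the inductive hypothesis (plus monotonicity of $V$) and a tail part over $(x,\,x+h]$ controlled by bounding $V_k$ near the origin. The one small formal point is your invocation of the inductive hypothesis ``at $x=0$'' to conclude $V_k(h)\leq U(h)\,(V(h))^{k-1}$ --- the lemma is stated for $x>0$, so strictly this needs a right-continuity / $x\downarrow 0$ limit; the paper sidesteps this by bounding $V_k(h)\leq (V(h))^{k}$ directly from the convolution recursion \eqref{eq:vdecomp} (iterating $V_m(h)\leq V_{m-1}(h)V(h)$) and then using $V(h)\leq U(h)$, which is a cleaner one-liner and worth adopting.
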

\begin{proof} 
We use mathematical induction.
For $k = 1$, write
\begin{multline}\label{ineq:subadd}
V(x+h) - V(x)  = \int_{[0,\,x+h]} F(x-y){\rm d}U(y) - \int_{[0,\,x]} F(x-y){\rm d}U(y)\\
 = \int_{(x,\, x+h]} F(x-y){\rm d}U(y) \leq U(x+h) - U(x) \leq U(h).
\end{multline}
The last inequality is due to subadditivity of the renewal function $U$, see Theorem 1.7 on p.10 in  \cite{Mitov+Omey:2014}.

Assume that inequality 
\eqref{ineq:vdiff} holds for $k \leq l-1$. Note that \eqref{eq:vdecomp} implies that $V_{l-1}(h) \leq (V(h))^{l-1} \leq U(h)(V(x+ h))^{l-2}$ for $l\geq 2$ and $h\geq 0$. 
Using this and the induction assumption, we have
\begin{multline*}
V_l(x+h)-V_l(x)  = \int_{[0,\,x]} (V_{l-1}(x+ h-y) - V_{l-1}(x-y)) {\rm d} V(y) \\ + \int_{(x, x+h] } V_{l-1}(x+ h-y){\rm d} V(y) \\
\leq U(h) \int_{[0,\,x]} (V(x+h-y))^{l-2} {\rm d} V(y) + V_{l-1}(h) (V(x+h)-V(x)) \\
\leq  U(h) (V(x+h))^{l-2} \cdot V(x) + U(h) (V(x+h))^{l-2} (V(x+h)-V(x))\\ =  U(h) (V(x+h))^{l-1}.
\end{multline*}
\end{proof}
\begin{lemma}\label{lem:var}
Assume that $ {\rm Var}\,\xi\in (0,\infty)$. Then, for $k \in \mn$,
\begin{equation}\label{eq:var}
a_k(t):=  {\rm Var}\, Y_k(t) = O(t^{2k-1}), \quad t \to \infty.
\end{equation}
\end{lemma}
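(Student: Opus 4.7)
I would prove Lemma \ref{lem:var} by induction on $k$. For the base case $k=1$ we need $a_1(t)={\rm Var}\,Y(t)=O(t)$. With the decomposition $Y(t)=X(t)+M(t)$, $M(t):=\int_{[0,\,t]}F(t-y)\,{\rm d}\nu(y)$, from the proof of Theorem \ref{imp2}, the estimate \eqref{eq:mom} at $v=0$, $\ell=1$ gives $\me\,X(t)^2=O(t)$. For $M(t)-V(t)=\int_{[0,\,t]}F(t-y)\,{\rm d}(\nu(y)-U(y))$, integration by parts (using $F(0)=0$) and Cauchy-Schwarz against the positive measure $-{\rm d}_y F(t-y)$ of total mass $F(t)\leq 1$, together with the classical renewal bound ${\rm Var}\,\nu(y)=O(y)$, yield $\me(M(t)-V(t))^2=O(t)$. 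Hence $a_1(t)=O(t)$.

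For the inductive step, assume $a_{k-1}(t)=O(t^{2k-3})$ and condition on $\mathcal{G}:=\sigma((\xi_r,\eta_r)_{r\geq 1})$. Given $\mathcal{G}$, the copies $Y_{k-1}^{(r)}$ in \eqref{eq:ndecomp} are independent with mean $V_{k-1}$ and variance $a_{k-1}$, so the total-variance formula gives
\[
a_k(t)=\int_{[0,\,t]} a_{k-1}(t-y)\,{\rm d}V(y)+\me\,B(t)^2,\qquad B(t):=\int_{[0,\,t]} V_{k-1}(t-y)\,{\rm d}(Y(y)-V(y)).
\]
The first summand is $O(t^{2k-2})$ by the induction hypothesis, the bound $V(n+1)-V(n)\leq U(1)$ (which follows from \eqref{ineq:vdiff} with $k=1$, $h=1$), and splitting $[0,\,t]$ into unit intervals.

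The crux is $\me\,B(t)^2=O(t^{2k-1})$. Since $V_{k-1}(0)=0$, an integration by parts rewrites $B(t)$ as an integral of $Y-V$ against the positive measure ${\rm d}\tilde\mu(y):=-{\rm d}_y V_{k-1}(t-y)$ of total mass $V_{k-1}(t)$. Cauchy-Schwarz against $\tilde\mu$, the base case $a_1(y)=O(y)$, the identity $\int_{(0,\,t]} y\,{\rm d}\tilde\mu(y)=\int_0^t V_{k-1}(u)\,{\rm d}u$, and Lemma \ref{lem:elem} then give
\[
\me\,B(t)^2 \leq V_{k-1}(t)\int_{(0,\,t]} a_1(y)\,{\rm d}\tilde\mu(y) \leq C\,V_{k-1}(t)\int_0^t V_{k-1}(u)\,{\rm d}u = O(t^{k-1})\cdot O(t^k) = O(t^{2k-1}).
\]
Combining, $a_k(t)=O(t^{2k-1})$. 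The main obstacle is precisely this last step: the integration-by-parts trick, trading the pointwise factor $V_{k-1}(t)$ for an integrated factor $\int_0^t V_{k-1}$, is what delivers the correct order $t^{2k-1}$ and avoids the need for a maximal inequality for $Y-V$.
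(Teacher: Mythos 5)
Your proof is correct, and the inductive step takes a genuinely different route from the paper's. The paper further splits $K_l(t):=\sum_{r\ge1}V_{l-1}(t-T_r)-V_l(t)$ as $K_{l1}+K_{l2}$, bounds $\me K_{l1}^2$ by a conditional-orthogonality argument on martingale differences, and controls $\me K_{l2}^2$ via the maximal inequality $\me\sup_{s\le t}(\nu(s)-U(s))^2=O(t)$ imported from Gnedin--Iksanov. You avoid this secondary split entirely: after the law of total variance gives $a_k(t)=\int_{[0,t]}a_{k-1}(t-y)\,{\rm d}V(y)+\me B(t)^2$ with $B(t)=\int_{[0,t]}V_{k-1}(t-y)\,{\rm d}(Y-V)(y)$, you rewrite $B(t)=\int_{[0,t]}(Y(t-z)-V(t-z))\,{\rm d}V_{k-1}(z)$ by Fubini (boundary terms vanish since $Y(0)=V(0)=V_{k-1}(0)=0$), apply Cauchy--Schwarz against ${\rm d}V_{k-1}$ of total mass $V_{k-1}(t)$, and feed in the base case $a_1(y)=O(1+y)$ together with $\int_{[0,t]}(t-z)\,{\rm d}V_{k-1}(z)=\int_0^t V_{k-1}(u)\,{\rm d}u$ and Lemma~\ref{lem:elem}. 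This reduces the whole inductive step to the $k=1$ estimate and elementary renewal asymptotics, replacing the maximal inequality with a Cauchy--Schwarz trade-off between a pointwise factor $V_{k-1}(t)=O(t^{k-1})$ and an integrated factor $O(t^k)$. For the base case you also diverge mildly: the paper uses the maximal inequality for the renewal-fluctuation term $I_2$, whereas you use the classical ${\rm Var}\,\nu(y)=O(y)$ plus Cauchy--Schwarz against the sub-probability measure ${\rm d}F$; both give $O(t)$. One small point worth making explicit in your write-up: the Cauchy--Schwarz step needs $a_1(y)\le C(1+y)$ \emph{uniformly} in $y\ge0$, not just asymptotically; this follows because $a_1(y)\le\me\nu(y)^2$ is nondecreasing, hence locally bounded, so the asymptotic $O(y)$ upgrades to a global $O(1+y)$ bound. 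Overall, your argument is correct, self-contained, and somewhat more elementary than the paper's.
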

\begin{proof}
We use mathematical induction. For $k=1$, write
\begin{multline*}
Y_1(t) - V_1(t) = \sum_{j\geq 1}(\1_{\{ \eta_j + S_{j-1}\leq t \}}-F(t-S_{j-1})) + ( \sum_{j\geq 0}F(t-S_j)-V_1(t))\\ =:I_1(t)+I_2(t).
\end{multline*}
Note that ${\rm Var}\,Y_1(t) = \me (Y_1(t) - V_1(t))^2 \leq 2(\me(I_1(t))^2 + \me(I_2(t))^2 ) $. Let $U$ be as in \eqref{def:u}. We have
$$
\me(I_1(t))^2 = \int_{[0,\,t]} F(t-y)(1-F(t-y)){\rm d}U(y) \leq \int_{[0,\,t]}(1-F(t-y)){\rm d}U(y).
$$
If $\me \eta = \infty$, then Lemma 6.2.9 in \cite{Iksanov:2016} with $r_1=0$ and $r_2=1$ yields
$$
\int_{[0,\,t]} (1-F(t-y)){\rm d}U(y) \sim \frac{1}{\mu} \int_0^t (1-F(y)) 
{\rm d}y = o(t),  \quad  t \to \infty,
$$
where $\mu=\me \xi<\infty$. If $\me \eta < \infty$, then $\int_{[0,\,t]} (1-F(t-y)){\rm d}U(y) = O(1)$ as $t \to \infty$ by the key renewal theorem. Thus, in any case, $\me(I_1(t))^2 = o(t)$ as $ \ t \to  \infty$.

In the proof of Lemma 4.2 in \cite{Gnedin+Iksanov:2020} it is shown that
\begin{equation}\label{eq:standard}
\me \sup_{s \in[0,\,t]} (\nu(s)-U(s))^2 = O(t), \quad t \to \infty,
\end{equation}
where $\nu(s)$ is the same as in \eqref{def:nu} (one of the assumptions in Lemma 4.2 of \cite{Gnedin+Iksanov:2020} is $\me \eta< 
\infty$; but this condition is not needed for \eqref{eq:standard}, because the left-hand side of \eqref{eq:standard} is a functional depending on $(S_j)_{j\geq 0}$ only). Therefore,  almost surely
\begin{multline}\label{eq:i2}
|I_2(t)| = \Big| \int_{[0,\,t]}F(t-y) {\rm d} (\nu(y) - U(y))  \Big| = \Big|\int_{[0,\,t]} (\nu(t-y) - U(t-y){\rm d} F(y) \Big|  \leq \\
\leq \int_{[0,\,t]} |\nu(t-y) - U(t-y)|{\rm d} F(y) \leq \sup_{s \in[0,\,t]} |\nu(s)-U(s)|\cdot F(t) \leq \sup_{s \in[0,\,t]} |\nu(s)-U(s)|.
\end{multline}
Consequently,  according to \eqref{eq:standard},  $\me (I_2(t))^2 \leq \me \sup_{s \in[0,\,t]} (\nu(s)-U(s))^2 = O(t)$ as $t \to \infty$. We have proved that $a_1(t) = O(t)$ as $ t \to \infty$.

Assume that relation 
\eqref{eq:var} holds for $k \leq l-1$. We shall use a representation 
$$
Y_l(t) - V_l(t) = \sum_{r \geq 1}\big(Y_{l-1}^{(r)}(t - T_r) - V_{l-1}(t-T_r)\big) + \big( \sum_{r\geq1} V_l(t-T_r) - V_l(t) \big) =: J_l(t) + K_l(t),
$$
which particularly entails 
$$
a_l(t) = \me (Y_l(t) - V_l(t))^2 = \me (J_l(t))^2 + \me(K_l(t))^2.
$$
Note that, according to the induction assumption, there exist $A>0$ and $t_0>0$ such that $a_{l-1}(t) \leq A t^{2l-3}$ for all $t \geq t_0$. Therefore, using \eqref{eq:assympt}  and  \eqref{ineq:subadd},
\begin{multline}\label{eq:jlimit}
\me (J_l(t))^2 = \int_{[0,\,t]} a_{l-1}(t-y){\rm d}V(y)\\ = \int_{[0,\,t-t_0]} a_{l-1}(t-y){\rm d}V(y) + \int_{(t-t_0,\,t]} a_{l-1}(t-y){\rm d}V(y)\\
\leq A \int_{[0,\,t]}(t-y)^{2l-3} {\rm d}V(y) + \sup_{s \in [0,t_0]} a_{l-1}(s) (V(t) - V(t-t_0))  \leq At^{2l-3}V(t) + O(1) \\
= O(t^{2l-2}), ~~ t \to \infty.
\end{multline}
Further,
\begin{multline*}
K_l(t) = \sum_{r\geq 0} \big( V_{l-1}(t-T_r) - (V_{l-1}\ast F)(t-S_{r-1}) \big) +   \big( \sum_{r\geq 0} (V_{l-1}\ast F)(t-S_{r-1}) - V_l(t)) \big) \\
=: K_{l1}(t) + K_{l2}(t).
\end{multline*}
Using $V_l = V_{l-1}\ast F\ast U$ and the same reasoning as in \eqref{eq:i2} we obtain
$$
|K_{l2}(t)| = \Big|  \int_{[0,\,t]} (V_{l-1}\ast F)(t-y) {\rm d} (\nu(y) - U(y))\Big| \leq \sup_{s \in [0,\,t]} |\nu(s)- U(s)|\cdot V_{l-1}(t) \ \text{ a.s.}
$$
Therefore, in view of \eqref{eq:assympt} and \eqref{eq:standard}, $$\me (K_{l2}(t))^2 \leq \me \sup_{s\in [0,\,t]}(\nu(s)- U(s))^2 (V_{l-1}(t))^2 = O(t^{2l-1}),\quad t \to \infty.$$ 

Finally,
\begin{multline*}
\me (K_{l1}(t))^2 = \sum_{r\geq 1} \me \big( V_{l-1}(t- T_r) - (V_{l-1}\ast F)(t-S_{r-1})  \big)^2   \\
\leq \sum_{r\geq 1} \Big[ \me \big(V_{l-1}(t- T_r)\big)^2 + \me \big((V_{l-1}\ast F)(t-S_{r-1})\big)^2 \Big] \\
= \int_{[0,\,t]} \big(V_{l-1}(t-y)\big)^2 {\rm d}V(y) + \int_{[0,\,t]} \big((V_{l-1}\ast F)(t-y)\big)^2 {\rm d}U(y)\\
\leq \big( V_{l-1}(t)\big)^2\cdot V(t) + \big(( V_{l-1}\ast F)(t)\big)^2\cdot U(t) = O(t^{2l-1}), \quad  t \to \infty.
\end{multline*}
For the last equality we have used $(V_{l-1}\ast F) (t) \leq V_{l-1}(t)$ for $t\geq 0$. The proof of the Lemma \ref{lem:var} is complete.
\end{proof}

We shall also need two results on the standard random walks.
The next lemma is a consequence of formula (33) in \cite{Alsmeyer+Iksanov+Marynych:2017}, with $\eta = \xi$.

\begin{lemma}\label{lem:aim}
For all positive b and c,
$$
\lim_{t\to \infty} \frac{\nu(t+b) - \nu(t)}{t^c} = 0 \quad {\rm  a.s.}
$$
\end{lemma}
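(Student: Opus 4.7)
The plan is to deduce the lemma from a uniform bound on the moments of the increments of $\nu$, following the strategy hinted at in the statement. The key input is formula (33) of \cite{Alsmeyer+Iksanov+Marynych:2017}, which specialized to $\eta=\xi$ furnishes, for every positive integer $\ell$ and every $h>0$, a constant $C_\ell(h)<\infty$ such that
$$\sup_{t\geq 0}\,\me(\nu(t+h)-\nu(t))^\ell\leq C_\ell(h).$$
Granted this uniform estimate, the remainder is a short Borel--Cantelli-plus-monotonicity argument, and no other probabilistic input is needed.

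First I would discretize. Applying Markov's inequality with exponent $\ell$ and $h=b+1$ gives, for every $\varepsilon>0$ and $n\in\mn$,
$$\mmp\{\nu(n+b+1)-\nu(n)>\varepsilon n^c\}\leq C_\ell(b+1)\,\varepsilon^{-\ell}\,n^{-c\ell}.$$
Choosing $\ell$ to be any integer with $c\ell>1$ makes the right-hand side summable in $n$, so by the Borel--Cantelli lemma $(\nu(n+b+1)-\nu(n))/n^c\to 0$ almost surely as $n\to\infty$ along the integers.

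Finally, I would bridge to real $t$ by monotonicity of $\nu$. For $t\in[n,n+1)$,
$$\nu(t+b)-\nu(t)\leq \nu(n+1+b)-\nu(n),$$
and dividing by $t^c\geq n^c$ (valid for $n\geq 1$) lets the integer-time result transfer directly to reals, yielding the claim. The main (and only substantial) obstacle is correctly invoking the moment estimate from \cite{Alsmeyer+Iksanov+Marynych:2017}; this is essentially a consequence of the fact that $\me \nu(1)^\ell<\infty$ for all $\ell$ together with a post-$t$ domination of the increment $\nu(t+h)-\nu(t)$ by an independent copy of $\nu(h)+1$, so no new technical ground must be broken here.
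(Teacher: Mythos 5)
The paper itself gives no proof of Lemma~\ref{lem:aim}; it only records that the claim follows from formula~(33) of Alsmeyer, Iksanov and Marynych (2017) upon specializing to $\eta=\xi$ (so that $T_k=S_k$ and the counting process of the perturbed walk collapses to $\nu$). Your argument therefore cannot really be compared to ``the paper's own proof,'' but it is a correct, self-contained derivation of the lemma, and the route you take is the standard one.

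Two remarks. First, your identification of formula~(33) with a uniform moment bound $\sup_{t\ge 0}\me(\nu(t+h)-\nu(t))^{\ell}\le C_{\ell}(h)$ is a guess; that formula more plausibly states an a.s.\ limit result for the perturbed counting process, which with $\eta=\xi$ would give the lemma outright. However, you recover from this ambiguity by noting, correctly, that the moment bound is a self-contained renewal fact: $\nu(t+h)-\nu(t)$ is stochastically dominated by $1+\nu'(h)$ for an independent copy $\nu'$ (strong Markov property at the first renewal epoch after $t$), and $\me(\nu(h))^{\ell}<\infty$ for all $\ell$ follows from $\me(\nu(1))^{\ell}<\infty$ (used elsewhere in the paper via Lemma~A.3 of the same reference). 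Second, the rest of the argument --- Markov's inequality with $\ell$ chosen so that $c\ell>1$, Borel--Cantelli along integers, and the monotonicity bridge $\nu(t+b)-\nu(t)\le\nu(n+1+b)-\nu(n)$ for $t\in[n,n+1)$, with $t^{c}\ge n^{c}$ --- is clean and correct. So your proposal fills in an omitted proof in essentially the canonical way; the only imprecision is the attribution of the moment estimate to formula~(33), which does not affect the validity of the argument.
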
\label{lem:diff}

\begin{lemma}\label{lem:aux}
Let $K_1, \ K_2\ : \ [0,\infty) \to [0,\infty)$ be nondecreasing functions and $K_1(t) \geq K_2(t)$ for $t \geq 0$. Assume that
\begin{equation}\label{assump:lemaux}
\limsup_{t\to \infty} \frac{K_1(t) + K_2(t)}{\int_0^t (K_1(y) - K_2(y)){\rm d}y} \leq \lambda \in (0, \infty).
\end{equation}
Then, for all $c>0$,
\begin{equation}\label{assert:lemaux}
\lim_{t \to \infty} \frac{\int_{[0,\,t]} (K_1(t-y) - K_2(t-y)){\rm d}\nu(y)}{ t^c \int_0^t (K_1(y) - K_2(y)){\rm d}y} = 0 \quad\text{{\rm a.s.}}
\end{equation}
\end{lemma}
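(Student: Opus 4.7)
My plan is to derive, for all large $t$, the a priori bound
$$\int_{[0,t]} D(t-y)\,{\rm d}\nu(y) \;\leq\; C\, \delta^*(t)\, \mathcal{I}(t),$$
where $D := K_1 - K_2$, $\mathcal{I}(t) := \int_0^t D(y)\,{\rm d}y$, $\delta^*(t) := \sup_{0 \leq y \leq t} (\nu(y+1) - \nu(y))$, and $C$ depends only on $\lambda$. Combined with the claim that $\delta^*(t) = o(t^c)$ a.s.\ for every $c > 0$, this immediately yields \eqref{assert:lemaux}.

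For the a priori bound I decompose according to unit annuli in the variable $t - y$:
$$\int_{[0,t]} D(t-y)\,{\rm d}\nu(y) = \sum_{k: S_k \leq t} D(t - S_k) \;\leq\; \sum_{n=0}^{\lfloor t\rfloor} \Big(\sup_{z \in [n,n+1]} D(z)\Big) \big(\nu(t-n) - \nu((t-n-1)_+)\big).$$
Monotonicity of $K_1, K_2$ gives $\sup_{z \in [n, n+1]} D(z) \leq K_1(n+1) - K_2(n)$; similarly $D(y) \geq D(n+1) - (K_1(n+1) - K_1(n))$ for $y \in [n, n+1]$, whence $D(n+1) \leq \int_n^{n+1} D + (K_1(n+1) - K_1(n))$ after averaging in $y$. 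Telescoping the identity $K_1(n+1) - K_2(n) = D(n+1) + (K_2(n+1) - K_2(n))$ then gives
$$\sum_{n=0}^{\lfloor t \rfloor} [K_1(n+1) - K_2(n)] \leq \mathcal{I}(t+1) + (K_1 + K_2)(t+1).$$
Feeding $(K_1 + K_2)(t) \leq (\lambda+1)\mathcal{I}(t)$ (valid for large $t$) into $\mathcal{I}' = D \leq K_1 + K_2 \leq (\lambda+1)\mathcal{I}$ yields the Gronwall-type estimate $\mathcal{I}(t+1) \leq e^{\lambda+1}\mathcal{I}(t)$, so the deterministic sum is at most $C\mathcal{I}(t)$; bounding each renewal increment by $\delta^*(t)$ finishes the bound.

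For $\delta^*(t) = o(t^c)$, since $\xi > 0$ a.s., Chernoff gives $\mmp\{S_k \leq 1\} \leq \rho^k$ with $\rho = \me e^{-\xi} < 1$, so $\me \nu(1)^\ell < \infty$ for every $\ell \in \mn$. A strong-Markov coupling at the first renewal past $y$ produces the uniform stochastic domination $\nu(y+1) - \nu(y) \leq 1 + \widetilde\nu(1)$ in distribution, hence $\me(\nu(y+1) - \nu(y))^\ell \leq C_\ell$ uniformly in $y$. Using $\delta^*(t) \leq \max_{0 \leq n \leq \lceil t\rceil}(\nu(n+2) - \nu(n))$, Markov at order $\ell$ and a union bound yield $\mmp\{\delta^*(t) > t^c\} \lesssim t \cdot t^{-c\ell}$, which is summable along $t_n = 2^n$ once $\ell > 2/c$; Borel--Cantelli plus monotonicity interpolation between consecutive $t_n$ then give $\delta^*(t) = o(t^c)$ a.s., and intersecting the exceptional sets for $c = 1/k$, $k \in \mn$, yields a single a.s.\ event on which the bound holds for every $c > 0$.

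The principal obstacle is precisely the range ``\emph{for all} $c > 0$''. The natural integration-by-parts identity $\int_{[0,t]} D(t-y)\,{\rm d}(\nu(y) - y/\mu) = D(0)(\nu(t) - t/\mu) + \int_{(0,t]} (\nu(t-x) - (t-x)/\mu)\,({\rm d}K_1 - {\rm d}K_2)(x)$ produces an error bounded by $\sup_{y \leq t}|\nu(y) - y/\mu| \cdot (K_1 + K_2)(t) = O(\sqrt{t \log \log t}\cdot \mathcal{I}(t))$ by the renewal LIL, which proves the lemma only for $c > 1/2$. To cover the whole range one must replace the global fluctuation of $\nu$ by the strictly smaller unit-window increment $\delta^*(t)$, which decays faster than any positive power of $t$ but is only accessible through the careful annulus decomposition and the Gronwall-type recycling of the hypothesis described above.
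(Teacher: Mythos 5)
Your proof is correct and follows essentially the same strategy as the paper's: both decompose $\int_{[0,\,t]} D(t-y)\,{\rm d}\nu(y)$ (with $D=K_1-K_2$) over unit intervals in the variable $t-y$, dominate each $\nu$-increment by $\delta^*(t)=\sup_{0\le s\le t}(\nu(s+1)-\nu(s))$, telescope the resulting deterministic sum using monotonicity and hypothesis \eqref{assump:lemaux} to get a bound of order $\int_0^t D$, and then use that $\delta^*(t)=o(t^c)$ a.s.\ for every $c>0$. The one real structural difference is how that last fact is obtained: the paper invokes Lemma~\ref{lem:aim} (which is just a citation of formula (33) in \cite{Alsmeyer+Iksanov+Marynych:2017} with $\eta=\xi$, and then takes a running maximum), whereas you re-derive it from scratch via a Chernoff bound giving $\me\,\nu(1)^\ell<\infty$, a uniform moment bound on $\nu(y+1)-\nu(y)$ by strong Markov, Markov's inequality along a geometric subsequence, Borel--Cantelli, monotonicity interpolation, and diagonalization over $c$. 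Your version is therefore more self-contained. You also spell out the Gronwall-type step $\int_0^{t+1}D\le e^{\lambda+1}\int_0^t D$, which is genuinely needed to control the occurrences of $K_1,K_2$ evaluated just past $t$ (the paper folds this into an ``$O(K_1(t)+K_2(t))$'' without comment; note that the naive one-step estimate $\int_0^{t+1}D-\int_0^t D\le(\lambda+1)\int_0^{t+1}D$ does not close when $\lambda\ge1$, so your clarification is worthwhile). Finally, the remark that a direct integration by parts against $\nu-\cdot/\mu$ together with the renewal LIL only yields the range $c>1/2$, and that passing to unit-window increments of $\nu$ is what unlocks all $c>0$, is an accurate diagnosis of why the argument takes this form.
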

\begin{proof}
We use a decomposition
$$
\int_{[0,\,t]} (K_1(t-y) - K_2(t-y)){\rm d}\nu(y) = \int_{[0,\,[t]]}... + \int_{[[t],\, t]}... =: I_1(t) + I_2(t).
$$
For $I_2(t)$ we have
$$
I_2(t)\leq \int_{[[t],\, t]} K_1(t-y) {\rm d}\nu(y) \leq K_1(t-[t])(\nu(t) - \nu([t]))
\leq K_1(1)(\nu(t) - \nu(t-1)).
$$
Hence, by Lemma \ref{lem:aim}, for all $c>0$, 
$\lim_{t \to \infty} t^{-c} I_2(t)=0$ a.s.
It remains to consider $I_1(t)$:
\begin{multline*}
I_1(t) = K_1(t) - K_2(t) + \sum_{j=0}^{[t]-1} \int_{(j,\,j+1]} (K_1(t-y) - K_2(t-y)){\rm d}\nu(y) \\
\leq K_1(t) - K_2(t)+ \sum_{j=0}^{[t]-1} (K_1(t-j) - K_2(t-j-1)) (\nu(j+1) - \nu(j))\\
\leq K_1(t) + \sup_{s \in[0,\,[t]]} (\nu(s+1) - \nu(s)) \sum_{j=0}^{[t]-1} (K_1(t-j) - K_2(t-j-1))\\
\leq K_1(t) + \sup_{s \in[0,[t]]} (\nu(s+1) - \nu(s)) \sum_{j=0}^{[t]-1} (K_1([t]+1-j) - K_2([t]-1-j)) \\
= \sup_{s \in[0,[t]]} (\nu(s+1) - \nu(s)) \left( \int_2^{[t]} 
(K_1(y) - K_2(y)){\rm d} y   +O(K_1(t) + K_2(t))\right).
\end{multline*}
Another application of Lemma \ref{lem:aim} 
yields
$$\lim_{t\to \infty} \frac{I_1(t)}{t^c \int_0^t (K_1(y) - K_2(y)){\rm d}y} = 0 \quad \text{ a.s.}
$$
The proof of Lemma \ref{lem:aux} is complete.
\end{proof}

\section{Proof of Theorem \ref{main}}

We use a decomposition
\begin{equation}\label{eq:decomp}
Y_j(t) - V_j(t)  = \sum_{k\geq 1} \big( Y_{j-1}^{(k)} (t-T_k) - V_{j-1} (t-T_k) \big) +  \sum_{k\geq 1}  V_{j-1} (t-T_k) - V_j(t),\quad j\geq 2,~t\geq 0.
\end{equation}
The first term of the decomposition is treated in Proposition \ref{second}. 
\begin{proposition}\label{second}
Assume that ${\rm Var}\,\xi\in (0,\infty)$. Then, for $j\geq 2$,
$$
\lim_{t\to \infty}\frac{ \sum_{k\geq 1} \big( Y_{j-1}^{(k)} (t-T_k) - V_{j-1} (t-T_k) \big)}{( t^{2j-1}\log\log t)^{1/2} } = 0 \quad\text{{\rm a.s.}}
$$
\end{proposition}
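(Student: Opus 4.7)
Write the numerator as
\[
J(t):=\sum_{k\geq 1}\big(Y_{j-1}^{(k)}(t-T_k)-V_{j-1}(t-T_k)\big)\1_{\{T_k\leq t\}}.
\]
The structural fact driving the argument is that, conditional on $\mathcal{F}_\infty:=\sigma(T_1,T_2,\ldots)$, the processes $(Y_{j-1}^{(k)})_{k\geq 1}$ are i.i.d.\ copies of $Y_{j-1}$ independent of $\mathcal{F}_\infty$, so $J(t)$ is a conditionally centered sum of independent summands. Taking unconditional expectations,
\[
\me J(t)^2=\int_{[0,\,t]}a_{j-1}(t-y){\rm d}V(y)=O(t^{2j-2})
\]
by Lemma \ref{lem:var} and Lemma \ref{lem:elem}; this is precisely the calculation already performed at \eqref{eq:jlimit}.

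Next I would fix $\alpha\in(1,2)$ and set $t_n:=n^{\alpha}$. Chebyshev's inequality combined with the moment bound above yields
\[
\mmp\Big\{|J(t_n)|>\varepsilon(t_n^{2j-1}\log\log t_n)^{1/2}\Big\}=O\Big(\frac{1}{n^{\alpha}\log\log n^{\alpha}}\Big),
\]
a summable sequence, so Borel--Cantelli gives $\lim_{n\to\infty}J(t_n)/(t_n^{2j-1}\log\log t_n)^{1/2}=0$ a.s.

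To lift the convergence to continuous $t$, decompose $J(t)=Y_j(t)-G_j(t)$ with $G_j(t):=\sum_{k\geq 1}V_{j-1}(t-T_k)\1_{\{T_k\leq t\}}$. Both $Y_j$ and $G_j$ are non-decreasing, so
\[
\sup_{u\in[t_n,\,t_{n+1}]}|J(u)-J(t_n)|\leq \big(Y_j(t_{n+1})-Y_j(t_n)\big)+\big(G_j(t_{n+1})-G_j(t_n)\big).
\]
The $G_j$-increment is handled deterministically: splitting $\int V_{j-1}(\cdot){\rm d}\nu$ at $t_n$, using \eqref{ineq:vdiff}, the asymptotic $U(h)\sim h/\mu$, Lemma \ref{lem:elem}, and the SLLN for $\nu$, one gets $G_j(t_{n+1})-G_j(t_n)=O(t_n^{j-1/\alpha})$, which is $o(t_n^{j-1/2})$ a.s.\ once $\alpha<2$.

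The principal obstacle is almost-sure control of the $Y_j$-increment: because ${\rm Var}\,Y_j(t)$ is already of order $t^{2j-1}$, matching the target normalisation, a one-shot second-moment argument cannot work. To get around this I would refine $[t_n,t_{n+1}]$ to a dyadic grid and imitate the Burkholder--Davis--Gundy programme from the end of Section \ref{sect:lil}, exploiting the conditional-independence structure to derive $2\ell$th-moment bounds for the grid increments with $\ell$ chosen large, and then closing via Markov's inequality plus Borel--Cantelli. The absence of any moment hypothesis on $\eta$ is exactly what makes this route technical, forcing the use of renewal-theoretic tools such as Lemma \ref{lem:aux} and the maximal bound \eqref{eq:standard} instead of the easier power-moment expansions available in \cite{Iksanov+Kabluchko+Kotelnikova:2025}.
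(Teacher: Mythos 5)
Your approach matches the paper's up to the point of lifting from the grid to continuous time: you correctly identify the second-moment calculation $\me J(t)^2 = \int_{[0,t]} a_{j-1}(t-y)\,{\rm d}V(y) = O(t^{2j-2})$, apply Chebyshev and Borel--Cantelli on a polynomial grid $t_n=n^\alpha$ (the paper takes $\alpha=3/2$), and you identify the right tools for the $G_j$-increment (inequality \eqref{ineq:vdiff}, $U(h)\sim h/\mu$, Lemma \ref{lem:elem}, and the SLLN for $\nu$). However, there is a genuine misdiagnosis in your last step: you assert that the $Y_j$-increment $Y_j(t_{n+1})-Y_j(t_n)$ is a ``principal obstacle'' requiring a BDG-type dyadic-refinement argument. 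That obstacle does not exist here, because you never need to bound this increment separately. Since both $Y_j$ and $G_j$ are nondecreasing, for $t\in[t_n,t_{n+1}]$ one has the two-sided sandwich
\begin{align*}
Z_j(t) &= Y_j(t)-G_j(t) \leq Y_j(t_{n+1})-G_j(t_n) = Z_j(t_{n+1}) + \big(G_j(t_{n+1})-G_j(t_n)\big),\\
Z_j(t) &\geq Y_j(t_n)-G_j(t_{n+1}) = Z_j(t_n) - \big(G_j(t_{n+1})-G_j(t_n)\big),
\end{align*}
so that $|Z_j(t)| \leq \max\big(|Z_j(t_n)|,|Z_j(t_{n+1})|\big) + \big(G_j(t_{n+1})-G_j(t_n)\big)$. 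The grid values $Z_j(t_n)$ are already controlled by your Borel--Cantelli step, and the $G_j$-increment is exactly the term you already propose to handle via \eqref{ineq:vdiff} and the SLLN. Equivalently, if you insist on your own decomposition via the sup, simply observe $Y_j(t_{n+1})-Y_j(t_n) = \big(Z_j(t_{n+1})-Z_j(t_n)\big) + \big(G_j(t_{n+1})-G_j(t_n)\big)$, both pieces of which you already control, so no BDG, no dyadic grid, and no high-moment machinery is required for this proposition. You should also note that the tools you cite as being forced on you by the arbitrary law of $\eta$ (Lemma \ref{lem:aux}, the bound \eqref{eq:standard}) are not invoked in the proof of Proposition \ref{second} at all; they are used in other parts of the paper. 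The actual proof of Proposition \ref{second} is elementary throughout.
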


We first prove Theorem \ref{main} with the help of Proposition \ref{second}. Afterwards, a proof of Proposition \ref{second} will be given.


\begin{proof}[Proof of Theorem \ref{main}]
By Proposition \ref{second}, the contribution of the first term in \eqref{eq:decomp} normalized by $(t^{2j-1} \log \log t)^{1/2}$ vanishes as $t \to \infty$.

For the second term in \eqref{eq:decomp}, write
\begin{multline*}
\sum_{k\geq 1}  V_{j-1} (t-T_k) - V_j(t) = \int_{[0,\,t]} Y(t-x) {\rm d} V_{j-1}(x) - V_j(t)\\
= \int_{[0,\,t]} \big( Y(t-x) - (F \ast \nu)(t-x) \big) {\rm d} V_{j-1}(x) + \Big( \int_{[0,\,t]} (F \ast \nu)(t-x) {\rm d} V_{j-1}(x)  - V_j(t) \Big) \\=: A_1(t) + A_2(t).
\end{multline*}
According to \eqref{eq:final}, $\lim_{t\to \infty}\frac{ Y(t) - (F \ast \nu)(t)}{ ( t\log \log t)^{1/2}} = 0$ a.s., whence 
$$\lim_{t\to \infty} \sup_{z \in [0,\,t]} \frac{ |Y(z) - (F \ast \nu)(z)| }{ ( t\log \log t)^{1/2}} = 0 \quad \text{a.s.} $$
With this at hand,
\begin{multline*}
\frac{|A_1(t)|}{t^{j-1/2} (\log \log t)^{1/2}} \leq  \sup_{z \in [0,\,t]} \frac{ |Y(z) - (F \ast \nu)(z)| }{ ( t\log \log t)^{1/2}} \cdot \frac{V_{j-1}(t)}{t^{j-1}} \longrightarrow 0 \quad \text{a.s.,} \quad  t \to \infty, \\
\text{using } \frac{V_{j-1}(t)}{t^{j-1}}   \longrightarrow \frac{1}{(j-1)! \mu^{j-1}}, \quad  t \to \infty.
\end{multline*}
Further, 
\begin{multline*}
A_2(t) = (F \ast \nu \ast V_{j-1})(t) - V_j(t) = \int_{[0,\,t]} (F\ast V_{j-1}) (t-x){\rm d}(\nu(x) -U(x))\\= \int_{[0,\,t]} (\nu(t-x) -U(t-x)){\rm d}(F\ast V_{j-1}) (x).
\end{multline*}
Recall that the distribution of $\eta$ is arbitrary. 
Now we show that in the subsequent proof $F$ can be replaced with an absolutely continuous distribution function that has a directly Riemann integrable (dRi) density.

Put $G(x) := 1 - \eee^{-x}$ for $x\geq0$. The function $H := F \ast G$ is absolutely continuous with density $h(x) = \int_{[0,\,x]} \eee^{-(x-y)} {\rm d} F(y)$ for $x\geq 0$. Since $x \mapsto \eee^{-x}$ is dRi on $[0,\infty)$, so is $h$ 
as a  Lebesgue–Stieltjes convolution of a dRi function and a distribution function, see Lemma 6.2.1 (c) in \cite{Iksanov:2016}. Note that $H(x) \leq F(x)$ for $x \geq 0$. To show that we can work with $H$ instead of $F$, it suffices to check that
\begin{equation}\label{eq:replace1}
\lim_{t\to \infty} \frac{(F \ast V_{j-1} \ast \nu) (t) - (H \ast V_{j-1} \ast \nu) (t)}{t^{j-1/2}} = 0 \quad\text{{\rm a.s.}}
\end{equation}
and
\begin{equation}\label{eq:replace2}
\lim_{t\to \infty} \frac{(F \ast V_{j-1} \ast U) (t) - (H \ast V_{j-1} \ast U) (t)}{t^{j-1/2}} = 0. 
\end{equation}
For \eqref{eq:replace2}, write
\begin{multline*}
(F \ast V_{j-1} \ast U) (t) - (H \ast V_{j-1} \ast U) (t) = \int_{[0,\,t]} (1-G(t-x)) {\rm d} V_j(x) \\ = \int_{[0,\,t]} \eee^{-(t-x)} {\rm d} V_j(x)
\sim \Big(\int_0^\infty \eee^{-y} {\rm d}y\Big) V_{j-1}(t) = V_{j-1}(t) \sim \frac{t^{j-1}}{(j-1)! \mu^{j-1}}, \quad t \to \infty,
\end{multline*}
where the asymptotic equalities are justified by  \eqref{eq:assympt} and Theorem 2 in \cite{Iksanov+Rashytov+Samoilenko:2023}. 
This proves \eqref{eq:replace2}.

To prove \eqref{eq:replace1}, we use Lemma \ref{lem:aux} with $K_1(t)  = (F \ast V_{j-1})(t)$ and $K_2(t)  = (H \ast V_{j-1})(t)$ for $t \geq 0$. Note that $K_2(t)=\me K_1(t-\theta)\1_{\{\theta \leq t \}}$, where $\theta$ is a random variable with the distribution function $G$, and that
\begin{multline*}
0 \leq \int_0^t (K_1(y) - K_2(y)) {\rm d}y = \int_0^t (K_1(y) - \me K_1(y-\theta) \1_{\{\theta \leq y \}})  {\rm d}y\\= \int_0^t K_1(y) {\rm d}y \cdot \eee^{-t} + \me \int_{t-\theta}^t K_1(y) {\rm d}y \1_{\{\theta \leq t \}}.
\end{multline*}
Using Laplace transforms and \eqref{eq:assympt}, we have
$$
K_1(t) \sim K_2(t) \sim V_{j-1}(t) \sim \frac{t^{j-1}}{(j-1)! \mu^{j-1}},\quad t \to \infty.
$$
Therefore,
$$
\lim_{t \to \infty}\frac{\int_0^t K_1(y) {\rm d}y \cdot \eee^{-t}}{K_1(t)} = 0
$$
and, in view of monotonicity,
$$
\frac{\me \theta K_1(t-\theta)\1_{\{\theta \leq t\}}}{K_1(t)} \leq \frac{\me \int_{t-\theta}^t K_1(y) {\rm d}y \1_{\{\theta \leq t\}}}{K_1(t)} \leq \me \theta = 1.
$$
By Lebesgue's  dominated convergence theorem
$$
\int_0^t (K_1(t) - K_2(t)) {\rm d}y \sim K_1(t) \sim \frac{t^{j-1}}{(j-1)! \mu^{j-1}},\quad t \to \infty.
$$
Thus, condition \eqref{assump:lemaux} holds with $\lambda = 2$. Consequently, \eqref{eq:replace1} holds by \eqref{assert:lemaux} with $c=1/2$. 

As a consequence of \eqref{eq:replace1} and \eqref{eq:replace2}, we can and do investigate $\hat A_2(t)= \int_{[0,\,t]} (\nu(t-x) -U(t-x)){\rm d}(H\ast V_{j-1}) (x)$ in place of $A_2(t)$. 
By 
Lemma 3.1 in \cite{Iksanov+Kabluchko+Kotelnikova:2025}, there exists a standard 
Brownian motion $(W(t))_{t\geq 0}$ such that
\begin{equation}\label{eq:brownian}
\lim_{t \to \infty}\frac{\sup_{z \in [0,\,t]} |\nu(z) -U(z) - \sigma \mu^{-3/2} W(z)| }{(t \log \log t)^{1/2}} = 0 \quad \text{a.s.}
\end{equation}
With this specific $(W(t))_{t\geq 0}$, write
\begin{multline*}
\hat A_2(t) =  \int_{[0,\,t]} \Big( \nu(t-x) -U(t-x) - \sigma \mu^{-3/2} W(t-x) \Big)  {\rm d} (H\ast V_{j-1}) (x) \\ +  \sigma \mu^{-3/2} \int_{[0,\,t]} W(t-x)   {\rm d} (H\ast V_{j-1}) (x)
=: B_1(t) + \sigma \mu^{-3/2} B_2(t).
\end{multline*}
Then, using 
\eqref{eq:brownian} and \eqref{eq:assympt}, we have
\begin{multline*}
|B_1(t)|  \leq \sup_{z \in [0,\,t]} |\nu(z) -U(z) - \sigma \mu^{-3/2} W(z)| \cdot( H\ast V_{j-1})(t) \\
\leq \sup_{z \in [0,\,t]} |\nu(z) -U(z) - \sigma \mu^{-3/2} W(z)| \cdot V_{j-1}(t) = o((t^{2j-1}\log \log t)^{1/2}), \quad  t \to \infty.
\end{multline*}

We are left with showing that
$$
C\bigg(\bigg( \frac{ (j-1)! \mu^{j-1} B_2(t)}{( 2(2j-1)^{-1} t^{2j-1}\log\log t)^{1/2}} \,:\, t> \eee\bigg)\bigg)  = [-1,1] \quad\text{{\rm a.s.}}
$$
Since $H$ is absolutely continuous with a 
dRi density, the function $ H \ast V_{j-1}$ is almost everywhere differentiable with
$$
( H \ast V_{j-1})^\prime (x) = \int_{[0,\,x]} h(x-y) {\rm d}V_{j-1}(y) \quad \text{for almost every }x\geq0.
$$
Consequently,
$$
\int_{[0,\,t]} W(t-x)   {\rm d} (H \ast V_{j-1}) (x)  = \int_{[0,\,t]} W(t-x) ( H \ast V_{j-1})'(x){\rm d}x.
$$
By Theorem 2 in \cite{Iksanov+Rashytov+Samoilenko:2023}, for 
$j\geq 2$,
\begin{equation}\label{eq:sim}
\int_{[0,\,x]} h(x-y) {\rm d}V_{j-1}(y) \sim \int _0^\infty h(y)  {\rm d} y \cdot \frac{x^{j-2}}{ (j-2)! \mu^{j-1}}= 
\frac{x^{j-2}}{ (j-2)! \mu^{j-1}},\quad x \to \infty.
\end{equation}
Іn particular, $( H \ast V_{j-1})^\prime$ varies regularly at infinity with index $j-2$, and Proposition 2.4 in \cite{Iksanov+Jedidi:2024} yields
$$
C\bigg(\bigg(\frac{(j-1)!\mu^{j-1}}{t^{j-1}}\frac{ \int_{[0,\,t]} W(t-x) ( H \ast V_{j-1})^\prime (x){\rm d}x  }{( 2(2j-1)^{-1}  t\log\log t)^{1/2} 
} \,:\, t> \eee \bigg)\bigg)  = [-1,1] \quad\text{{\rm a.s.}}
$$
Here, we have used that \eqref{eq:sim} entails $$( H \ast V_{j-1})(x) \sim  \frac{x^{j-1}}{(j-1)! \mu^{j-1}},\quad x \to \infty,$$
see Proposition 1.5.8 in \cite{Bingham:1987}. The proof of Theorem \ref{main} is complete.

\end{proof}


Finally, we prove Proposition \ref{second}. 
\begin{proof}[Proof of Proposition \ref{second}]
Put $Z_j(t) = \sum_{k\geq 1} \big( Y_{j-1}^{(k)} (t-T_k) - V_{j-1} (t-T_k) \big) $ for $t\geq 0$.
Relation 
\eqref{eq:var} implies that there exist $t_0>0$ and $A>0$ such that $a_{j-1}(t) \leq A t^{2j-3}$ for all $t \geq t_0$. Using the same reasoning as in \eqref{eq:jlimit}, we have
\begin{equation}\label{eq:limit2}
\me ( Z_j(t) )^2 = \int_{[0,\,t]} a_{j-1} (t-x) {\rm d} V(x) = O(t^{2j-2}), \quad t \to \infty.
\end{equation}
By Markov's inequality and \eqref{eq:limit2}, for all $\varepsilon >0$,
$$
\sum_{n\geq 1} \mmp \left\lbrace \frac{|Z_j(n^{3/2})|}{n^{(3/2)(j-1/2)}} > \varepsilon \right\rbrace \leq \sum_{n\geq 1} \frac{\me (Z_j(n^{3/2}))^2}{\varepsilon^2 n^{3(j-1/2)}} < \infty.
$$
Hence, by the Borel–Cantelli lemma,
\begin{equation}\label{eq:limit3}
\lim_{n \to \infty} \frac{Z_j(n^{3/2})}{n^{(3/2)(j-1/2)}} = 0 \quad \text{a.s.}
\end{equation}
It remains to pass from an integer argument to a continuous argument. For any $t \geq 0$ there exists $n \in \mn_0$ such that $t \in [n^{3/2}, (n + 1)^{3/2})$. By monotonicity,
\begin{multline*}
\frac{Z_j(t)}{t^{j-1/2}} \leq \frac{Z_j((n+1)^{3/2})}{n^{(3/2)(j-1/2)}} \\ + \frac{\int_{[0,\,(n+1)^{3/2}]}V_{j-1} ((n+1)^{3/2} - x) {\rm d} Y(x) - \int_{[0,\,n^{3/2}]} V_{j-1} (n^{3/2} - x){\rm d} Y(x)}{n^{(3/2)(j-1/2)}}.
\end{multline*}
Relation \eqref{eq:limit3} implies that the first summand on the right-hand side converges to $0$ a.s. \ as $n \to \infty$. The second summand is equal to
\begin{multline*}
\int_{(n^{3/2},(n+1)^{3/2}]}V_{j-1} ((n+1)^{3/2} - x) {\rm d} Y(x)   \\
+ \int_{[0,n^{3/2}]} (V_{j-1} ((n+1)^{3/2} - x) - V_{j-1}(n^{3/2} - x)) {\rm d} Y(x) =: X_{j,1}(n) + X_{j, 2} (n).
\end{multline*}
By monotonicity, for $j\geq 2$, as $n \to \infty$, a.s.
\begin{multline*}
X_{j,1}(n) \leq V_{j-1}((n+1)^{3/2} - n^{3/2}) (Y((n+1)^{3/2}) - Y(n^{3/2}))\\ = O(n^{j/2 + 1})  = o(n^{(3/2)(j-1/2)}).
\end{multline*}
Here, the penultimate equality is justified by the inequality $ Y(t) \leq \nu(t)$ for $t\geq 0$, the strong law of large numbers for renewal processes
$\lim_{n\to\infty} n^{-1} \nu (n) = \mu^{-1}$ a.s. and $V_{j-1} ((n+1)^{3/2} - n^{3/2}) = O(n^{(j-1)/2})$ as $n \to \infty$, which holds true by \eqref{eq:assympt}.

Using \eqref{ineq:vdiff}, we infer
\begin{multline*}
X_{j,2}(n) \leq U 
((n+1)^{3/2} - n^{3/2}) (V((n+1)^{3/2}))^{j-2} Y(n^{3/2})\\ = O(n^{(3/2)(j-2/3)}) = o(n^{(3/2)(j-1/2)})
\end{multline*}
a.s. as $n \to \infty$. The penultimate equality is secured by the elementary renewal theorem, 
the strong law of large numbers for renewal processes and the inequality $ Y(t) \leq \nu(t)$ for $t\geq 0$.

We have shown that
$$
\limsup_{t \to \infty} t^{-(j-1/2)} Z_j(t) \leq 0 \quad \text{a.s.}
$$
An analogous argument proves the converse inequality for the lower limit. The proof of Proposition \ref{second} is complete.
\end{proof}

\begin{acknowledgement}[title={Acknowledgment}]
I gratefully acknowledge the support of my academic supervisor Alexander Iksanov. His useful advice and suggestions were really helpful for me during this investigation.
\end{acknowledgement}

\bibliographystyle{bib/vmsta-mathphys}
\bibliography{bib/paper}

\end{document}